\newtheorem{theorem}[equation]{Theorem}
\newtheorem{corollary}[equation]{Corollary}
\newtheorem{lemma}[equation]{Lemma}
\numberwithin{equation}{section}
\newcommand{\beq}{\begin{equation}}
\newcommand{\eeq}{\end{equation}}
\newcommand{\id}{{\rm Id}}
\DeclareMathOperator{\Tor}{Tor}
\newcommand{\blank}{\mbox{$\underline{\makebox[10pt]{}}$}}
\newcommand{\bbar}[1]{\overline{#1}}
\renewcommand{\P}{\mathbb{P}}
\newcommand{\struct}{\mathcal{O}}
\newcommand{\Esh}{\mathcal{E}}
\newcommand{\Fsh}{\mathcal{F}}
\renewcommand{\Lsh}{\mathcal{L}}
\newcommand{\sh}[1]{\mathcal{#1}}
\newcommand{\shTor}{\mathcal{T}\!{\it or}}
\DeclareMathOperator{\Tot}{Tot}
\DeclareMathOperator{\Spec}{Spec}
\DeclareMathOperator{\len}{len}
\newcommand{\kbar}{\bbar{k}}
\newcommand{\mf}{\mathfrak}
\title{A general homological Kleiman-Bertini theorem}
\author{Susan J. Sierra}
\address{Department of Mathematics \\
University of Washington\\
Seattle, WA 98195}
\email{ssierra@umich.edu}
\date{\today}
\keywords{transversality, generic transversality, homological transversality, Kleiman's theorem,  group action}
\subjclass[2000]{Primary 14L30; Secondary 16S38}
\thanks{The author was partially supported by NSF grants DMS-0502170 and DMS-0802935.  This paper is part of the author's Ph.D. thesis at the University of Michigan under the direction of J.T. Stafford.}
\begin{document}
\begin{abstract}
Let $G$ be a smooth algebraic group acting on a variety $X$.  Let $\sh{F}$ and $\sh{E}$ be coherent sheaves on $X$.  We show that if all the higher $\shTor$ sheaves of $\sh{F}$ against $G$-orbits vanish, then for generic $g \in G$, the sheaf $\shTor^X_j(g \sh{F}, \sh{E})$ vanishes for all $j \geq 1$.  This generalizes a result of Miller and Speyer for transitive group actions and a result of Speiser, itself generalizing the classical Kleiman-Bertini theorem, on generic transversality, under a general  group action, of smooth subvarieties over an algebraically closed field of characteristic 0.
\end{abstract}
\maketitle

\section{Introduction}\label{sec-introduction}
All schemes that we consider in this paper are of finite type over a fixed field $k$; we make no assumptions on the characteristic of $k$.  

Our  starting point  is the following result of Miller and Speyer:

\begin{theorem}\label{thm-MS}
{\em \cite{MS}}
Let $X$ be a variety with a transitive left action of a smooth algebraic group $G$.  Let $\sh{F}$ and $\sh{E}$ be coherent sheaves on $X$, and for all $k$-points $g \in G$, let $g \sh{F}$ denote the pushforward of $\sh{F}$ along multiplication by $g$.  Then there is a dense Zariski open subset $U$ of $G$ such that, for all $k$-rational points $g \in U$ and for all $j \geq 1$, the sheaf $\shTor_j^X(g \sh{F}, \sh{E})$ is zero.
\end{theorem}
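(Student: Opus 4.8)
The plan is to spread $\sh{F}$ and $\sh{E}$ out into a family over $G\times X$, prove a Tor-vanishing statement on $G\times X$ into which the transitivity of the action enters essentially, and then specialize to a generic fibre by generic flatness and base change.

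First I would introduce the ``twisted action'' morphism $b\colon G\times X\to X$, $b(g,x)=g^{-1}x$, the second projection $\pi\colon G\times X\to X$, and the coherent sheaves $\sh{F}':=b^{*}\sh{F}$ and $\sh{E}':=\pi^{*}\sh{E}$ on $G\times X$. Since $b=\pi\circ\sigma$ for the automorphism $\sigma(g,x)=(g,g^{-1}x)$ of $G\times X$ over $G$, and $\pi$ is pulled back from $X$, both $\sh{F}'$ and $\sh{E}'$ are flat over $G$; and, writing $i_{g}\colon X\cong\{g\}\times X\hookrightarrow G\times X$ for the inclusion of the fibre over a $k$-rational point $g$, the restriction $b|_{\{g\}\times X}$ is multiplication by $g^{-1}$, so that $i_{g}^{*}\sh{F}'=g\sh{F}$ and $i_{g}^{*}\sh{E}'=\sh{E}$.

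The heart of the matter will be the vanishing $\shTor^{G\times X}_{j}(\sh{F}',\sh{E}')=0$ for all $j\geq 1$. For this I would factor through the morphism $c\colon G\times X\to X\times X$, $c(g,x)=(g^{-1}x,x)$, under which $\sh{F}'=c^{*}p_{1}^{*}\sh{F}$ and $\sh{E}'=c^{*}p_{2}^{*}\sh{E}$ ($p_{1},p_{2}$ the projections of $X\times X$). Two ingredients are needed. (i) The external Tor sheaves $\shTor^{X\times X}_{j}(p_{1}^{*}\sh{F},p_{2}^{*}\sh{E})$ vanish for $j\geq 1$: a locally free resolution $F_{\bullet}\to\sh{E}$ pulls back under $p_{2}$ to a locally free resolution of $p_{2}^{*}\sh{E}$, and after tensoring with $p_{1}^{*}\sh{F}$ the resulting complex is, locally, $M\otimes_{k}F_{\bullet}$ (where $M$ is the module of $\sh{F}$), which stays exact because $M$ is flat over the field $k$. (ii) \emph{Here is where transitivity is used:} the morphism $c$ is flat. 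Indeed $c$ differs from the map $a\colon(g,x)\mapsto(gx,x)$ only by the automorphism $\iota\times\id$ of $G\times X$ ($\iota$ = inversion on $G$), so it suffices that $a$ be flat; and $a$ is a morphism of $X$-schemes---via $p_{2}$ on the target and $\pi$ on the source, both flat over $X$ since $X$ is flat over $k$---whose fibre over a point $x$ is the orbit map $G_{k(x)}\to X_{k(x)}$, $h\mapsto h\cdot x$. By the fibrewise criterion for flatness it is then enough that every such orbit map be flat; but transitivity says precisely that the action map $a$ is surjective, so the orbit of every point $x$ is all of $X_{k(x)}$ and the orbit map is the faithfully flat projection of a homogeneous space $G/\mathrm{Stab}(x)$ (flat by generic flatness and homogeneity). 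Granting (i) and (ii), pulling a locally free resolution back along the flat morphism $c$ gives $\shTor^{G\times X}_{j}(\sh{F}',\sh{E}')\cong c^{*}\shTor^{X\times X}_{j}(p_{1}^{*}\sh{F},p_{2}^{*}\sh{E})=0$ for $j\geq 1$.

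Finally I would put $\sh{G}:=\sh{F}'\otimes_{\struct_{G\times X}}\sh{E}'$ and apply generic flatness to obtain a dense Zariski open $U\subseteq G$ over which $\sh{G}$ is flat. For a $k$-rational point $g\in U$, choose locally a locally free resolution $P_{\bullet}\to\sh{E}'$ over $\struct_{G\times X}$; then $\sh{F}'\otimes P_{\bullet}$ is a complex of $\struct_{G}$-flat sheaves whose only nonzero homology is $\sh{G}$, in degree $0$, and $\sh{G}$ is $\struct_{G}$-flat near $g$, so restricting this complex to the fibre $\{g\}\times X$ leaves exactly $\sh{G}|_{\{g\}\times X}$ in degree $0$ and nothing in positive degrees. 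On the other hand, using the $\struct_{G}$-flatness of $\sh{F}'$, $\sh{E}'$ and the $P_{i}$, that restriction equals $g\sh{F}\otimes_{\struct_{X}}(P_{\bullet}|_{\{g\}\times X})$ with $P_{\bullet}|_{\{g\}\times X}$ a locally free resolution of $i_{g}^{*}\sh{E}'=\sh{E}$, so its homology in degree $j$ is $\shTor^{X}_{j}(g\sh{F},\sh{E})$. Comparing the two computations gives $\shTor^{X}_{j}(g\sh{F},\sh{E})=0$ for all $j\geq 1$ and all $k$-rational $g\in U$. The one real obstacle I foresee is making (ii) fully rigorous---reducing flatness of $c$ to flatness of orbit maps via the fibrewise criterion and the faithful flatness of $G\to G/H$, with attention to disconnected $G$ and non-closed residue fields; everything else is formal bookkeeping with resolutions plus generic flatness.
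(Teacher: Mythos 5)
Your argument is correct, but it is not the route the paper takes: here Theorem~\ref{thm-MS} is obtained as a corollary of the general Theorem~\ref{thm-generalgroup}, whose hypothesis (1) is vacuous for a transitive action, with the real work done by Theorems~\ref{thm-final} and~\ref{thm-family}; what you have written is essentially a direct reconstruction of Miller and Speyer's original proof. The two points of genuine divergence are these. First, you encode transitivity as flatness of $c\colon (g,x)\mapsto(g^{-1}x,x)$, proved by the fibrewise flatness criterion from faithful flatness of orbit maps, and then obtain the global vanishing $\shTor^{G\times X}_j(\Fsh',\Esh')=0$ from a K\"unneth computation over $k$ plus flat base change along $c$; the paper instead encodes the orbit condition as $\rho$-flatness of $p^*\Fsh$ over $X$ (condition (3) of Theorem~\ref{thm-final}), tested against skyscrapers via Lemmas~\ref{lem-comp} and~\ref{lem1} --- the same underlying use of flatness of orbit maps, packaged differently. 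Second, for the specialization you restrict a complex of $G$-flat sheaves with $G$-flat homology to the fibre $\{g\}\times X$ and track homology directly, whereas the paper's substitute is the double-complex Lemma~\ref{lem3}, which exchanges $\shTor^W_j(\Fsh\otimes_X\Esh, q^*k_a)$ with $\shTor^X_j(\Fsh_a,\Esh)$ for an arbitrary flat family over a generically reduced base. Your version is more elementary and self-contained for the transitive case; the paper's machinery buys the general statement (arbitrary orbit closures, arbitrary $f\colon W\to X$). The one step you should still nail down is the one you flagged: flatness of $c$ requires the orbit map $G_{k(x)}\to X_{k(x)}$ to be faithfully flat for \emph{every} point $x$ of $X$, including non-closed ones, which follows from geometric transitivity together with faithful flatness of $G\to G/G_x$ and reducedness of the variety $X$; this is true but is exactly where the care must go.
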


As Miller and Speyer remark, their result is a homological generalization of the Kleiman-Bertini theorem: in characteristic 0, if  $\sh{F} = \struct_W$ and $\sh{E} = \struct_Y$ are structure sheaves of smooth subvarieties of $X$ and $G$ acts transitively on $X$, then $gW$ and $Y$ meet transversally for generic $g$, implying that $\struct_{gW} = g \struct_W$ and $\struct_Y$ have no higher $\shTor$.   
Motivated by this, if $\sh{F}$ and $\sh{E}$ are quasicoherent sheaves on $X$ with $\shTor_j^X(\sh{F}, \sh{E}) = 0$ for $j \geq 1$, we will say that $\sh{F}$ and $\sh{E}$ are {\em homologically transverse}; if $\sh{E} = \struct_Y$ for some closed subscheme $Y$ of  $X$, we will simply  say that $\sh{F}$ and $Y$ are homologically transverse.

Homological transversality has a geometric meaning if $\Fsh = \struct_W$ and $\Esh = \struct_Y$ are structure sheaves of closed subschemes of $X$.   If $P$ is a component of $Y \cap W$, then Serre's formula for the multiplicity of the intersection of $Y$ and $W$ at $P$ \cite[p.~427]{Ha} is:
\[ i(Y, W; P) = \sum_{j \geq 0} (-1)^j \len_P(\shTor^X_j(\Fsh, \Esh)),\]
where the length is taken over the local ring at $P$.  Thus if $Y$ and $W$ are homologically transverse, their intersection multiplicity at $P$ is simply the length of their scheme-theoretic intersection over the local ring at $P$.

It is natural to ask what conditions on the action of $G$ are necessary to conclude that homological transversality is generic in the sense of Theorem~\ref{thm-MS}.  In particular, the restriction to transitive actions is unfortunately strong, as it excludes important situations such as the torus action on $\P^n$.  On the other hand, suppose that $\Fsh$ is the structure sheaf of the closure of a non-dense orbit.  Then for all $k$-points $g \in G$, we have  $\shTor^X_1(g \sh{F}, \sh{F}) = \shTor^X_1(\Fsh, \Fsh) \neq 0$, and so the conclusion of Theorem~\ref{thm-MS} fails (as long as $G(k)$ is dense in $G$).  Thus for non-transitive group actions some additional hypothesis is necessary.

The main result of this paper is that there is a simple condition for homological transversality to be generic.  This is:

\begin{theorem}\label{thm-generalgroup}
Let $X$ be a scheme with a left action of  a smooth algebraic group $G$, and let $\Fsh$ be a coherent sheaf on $X$. Let $\kbar$ be an algebraic closure of $k$.  Consider the following conditions:
\begin{enumerate}
\item For all closed points $x \in X \times \Spec \kbar$, the pullback of $\Fsh$ to $X \times \Spec \kbar$ is homologically transverse to the closure of the $G(\kbar)$-orbit of $x$;
\item For all coherent sheaves $\Esh$ on $X$, there is a Zariski open and dense subset $U$ of $G$ such that for all $k$-rational points $g \in U$, the sheaf $g \Fsh$  is homologically transverse to $\Esh$.
\end{enumerate}
Then (1) $\Rightarrow$ (2).  If $k$ is algebraically closed,  then (1) and (2) are equivalent.
\end{theorem}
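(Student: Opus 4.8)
\emph{The easy direction and the reductions.} Suppose first $k=\kbar$ and (2) holds; given a closed point $x$, the orbit closure $Z=\overline{Gx}$ is $G$-stable, so (2) gives a $g\in G(k)$ with $\shTor^X_j(g\Fsh,\struct_Z)=0$ for $j\ge1$. Pushing forward along the automorphism $m_{g^{-1}}$ and using $g^{-1}Z=Z$ shows $\Fsh$ is homologically transverse to $Z$, which is (1). For $(1)\Rightarrow(2)$, let $a,p\colon G\times X\to X$ be the action map and the projection; both are faithfully flat and smooth because $G$ is smooth. Let $\Phi\colon G\times X\to G\times X$, $(g,x)\mapsto(g,gx)$, and set $\mathcal F^\sharp:=\Phi_*p^*\Fsh$; this is flat over $G$ and restricts to $g\Fsh$ on $\{g\}\times X$, and $p^*\Esh$ is also flat over $G$, so by flat base change along $\Spec k(g)\to G$ the complex $\mathcal F^\sharp\otimes^{\mathbf L}_{G\times X}p^*\Esh$ restricts on $\{g\}\times X$ to $g\Fsh\otimes^{\mathbf L}_X\Esh$. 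Hence if $\shTor^{G\times X}_j(\mathcal F^\sharp,p^*\Esh)=0$ for all $j\ge1$, then generic flatness over $G$ of the \emph{single} coherent sheaf $\mathcal F^\sharp\otimes_{G\times X}p^*\Esh$ yields a dense open $U\subseteq G$ over which $\shTor^X_j(g\Fsh,\Esh)=0$ for $j\ge1$, i.e.\ (2). Since $\mathcal F^\sharp\otimes^{\mathbf L}p^*\Esh\cong(\Phi^{-1})^*(p^*\Fsh\otimes^{\mathbf L}a^*\Esh)$, and since both the hypothesis (1) and the vanishing of these Tor sheaves are insensitive to faithfully flat base change along $k\to\kbar$, it suffices to prove: \emph{if $k=\kbar$ and $\Fsh$ is homologically transverse to every orbit closure, then $\shTor^{G\times X}_j(p^*\Fsh,a^*\Esh)=0$ for every coherent $\Esh$ and every $j\ge1$.}

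\emph{The $X\times X$ picture.} Write $\sigma=(a,p)\colon G\times X\to X\times X$. Because $\pi_1\circ\sigma=a$ and $\pi_2\circ\sigma=p$ are flat one has $L\sigma^*\pi_1^*\Esh=a^*\Esh$ and $L\sigma^*\pi_2^*\Fsh=p^*\Fsh$, and because $k$ is a field $\pi_1^*\Esh\otimes^{\mathbf L}_{X\times X}\pi_2^*\Fsh=\Esh\boxtimes_k\Fsh$ has no higher $\shTor$; therefore
\[
p^*\Fsh\otimes^{\mathbf L}_{G\times X}a^*\Esh \;=\; L\sigma^*\bigl(\Esh\boxtimes_k\Fsh\bigr).
\]
So we must show $\Esh\boxtimes_k\Fsh$ has no higher derived pullback along $\sigma$; this is local on $G\times X$, and a change-of-rings computation, using that $\struct_{G\times X}$ is flat over each of the two copies of $\struct_X$ embedded through $a$ and through $p$, rewrites the stalk at $\xi=(g_0,x_0)$ as $\shTor^{\struct_{X,x_0}}_j(\Fsh_{x_0},(a^*\Esh)_\xi)$, with $(a^*\Esh)_\xi$ regarded over $\struct_{X,x_0}$ via $p$. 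When $\sigma$ is flat (exactly the transitive case) this Tor is $\sigma^*$ of the exterior Tor and vanishes, which reproves Theorem~\ref{thm-MS}.

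\emph{The general case by induction on $\dim X$.} By Rosenlicht's theorem there is a $G$-stable dense open $X^{\circ}\subseteq X$, which we may take to be smooth and to carry a geometric quotient $q\colon X^{\circ}\to Y$ with $q$ smooth and with the orbits as its fibres. On $X^{\circ}$ the orbit closures are the fibres of $q$, which are smooth hence reduced, so hypothesis (1) restricted to $X^{\circ}$ says precisely that $\Fsh|_{X^{\circ}}$ is flat over $Y$ (by the local criterion of flatness). After shrinking $X^{\circ}$ so the stabilizers are equidimensional, $\sigma$ restricts to a surjective morphism $\sigma^{\circ}\colon G\times X^{\circ}\to X^{\circ}\times_Y X^{\circ}$ of smooth schemes with equidimensional fibres, hence flat; and flatness of $\Fsh|_{X^{\circ}}$ over $Y$ makes the pullback of $\Fsh|_{X^{\circ}}$ along the second projection $X^{\circ}\times_Y X^{\circ}\to X^{\circ}$ flat over $X^{\circ}$ along the first projection, so $\shTor^{X^{\circ}\times_Y X^{\circ}}_j=0$ for $j\ge1$ against the pullback of $\Esh$; pulling back along the flat map $\sigma^{\circ}$ gives $\shTor^{G\times X^{\circ}}_j(p^*\Fsh,a^*\Esh)=0$ for $j\ge1$. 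The complement $V=X\setminus X^{\circ}$, taken reduced, is a $G$-stable closed subscheme of strictly smaller dimension and a finite union of orbit closures; using that $\Fsh$ is homologically transverse to $V$ (which I would deduce from (1) by a Mayer--Vietoris and Noetherian-induction argument, so that restriction to $V$ is exact and $(V,\Fsh|_V)$ again satisfies (1)), the inductive hypothesis controls $\shTor^{G\times V}_j$, and one glues the open and closed contributions — reducing a general $\Esh$ to one supported over $V$, where the projection formula along $G\times V\hookrightarrow G\times X$ applies.

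\emph{Expected main obstacle.} The conceptual core is the generic-quotient step, which converts hypothesis (1) into the flatness of $\Fsh|_{X^\circ}$ over $Y$ that powers the transitive-case argument; the real work is the last, gluing step of Paragraph~3 — setting up the Noetherian induction so that the vanishing on $G\times X^{\circ}$ and the inductively known vanishing on $G\times V$ assemble to vanishing on all of $G\times X$, while tracking scheme structures and the fact that $\Esh$ does not respect the $G$-stable stratification. In particular, proving cleanly that homological transversality to all orbit closures propagates to all reduced $G$-stable closed subschemes, and that this exactness is exactly what the gluing needs, is where I expect the technical difficulty to concentrate.
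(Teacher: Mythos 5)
Your framing of the problem is the right one: the easy direction via $G$-equivariance, the reduction to a Tor-vanishing statement on $G\times X$ (equivalently, flatness of $p^*\Fsh$ over $X$ via the action map), and the passage from that vanishing to a dense open $U\subseteq G$ by generic flatness of the single sheaf $\mathcal{F}^\sharp\otimes p^*\Esh$ all match the architecture of the paper (conditions (2) and (3) of Theorem~\ref{thm-final} and Theorem~\ref{thm-family}). The gap is in the central step, where you must deduce $\shTor^{G\times X}_j(p^*\Fsh,a^*\Esh)=0$ for $j\geq 1$ from hypothesis (1). Your route --- Rosenlicht quotient on a $G$-stable dense open $X^\circ$, miracle flatness for $G\times X^\circ\to X^\circ\times_Y X^\circ$, then Noetherian induction on the complement $V$ with a gluing argument --- is incomplete exactly where you say it is, and I do not believe it can be completed as stated. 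First, identifying hypothesis (1) on $X^\circ$ with flatness of $\Fsh|_{X^\circ}$ over $Y$ requires the scheme-theoretic fibers of $q$ to coincide with the orbit closures of (1); Rosenlicht gives a geometric quotient on a dense open, but not smoothness of $q$ nor reducedness of its fibers in positive characteristic (the paper assumes nothing about $\operatorname{char} k$). Second, and more seriously, the induction needs $\Fsh$ to be homologically transverse to the $G$-stable closed subscheme $V$, and needs $(\Fsh\otimes\struct_V, V)$ to again satisfy (1). Homological transversality to each individual orbit closure does not in any evident way propagate to a union of orbit closures: the Mayer--Vietoris sequence for $\struct_{\overline{Gx_1}\cup\overline{Gx_2}}$ drags in $\Tor$ against the scheme-theoretic intersection, which is $G$-stable but need not be an orbit closure, need not be reduced, and forces a recursion with no clear termination. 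Finally, there is no short exact sequence splitting a general $\Esh$ into a piece supported on $V$ and a piece ``supported on $X^\circ$,'' so the proposed reduction of $\Esh$ to the closed stratum is not available.

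The reason this difficulty is avoidable is worth internalizing: by the local criterion for flatness, $p^*\Fsh$ is flat over $X$ if and only if $\shTor^{G\times X}_j(p^*\Fsh,\mu^*k_x)=0$ for all closed points $x$ and $j\geq 1$, so one only ever has to test against a single skyscraper, i.e.\ against the single orbit map $\mu_x\colon G\times\{x\}\to X$. A flat-base-change computation (the paper's Lemma~\ref{lem-comp}) identifies this Tor with $\shTor^X_j(\Fsh,\struct_{G\times\{x\}})$, and since over $\kbar$ the map $G\times\{x\}\to Gx$ is flat, this is the pullback of $\shTor^X_j(\Fsh,\struct_{\bbar{Gx}})$ restricted to the locally closed orbit --- which vanishes by (1), one orbit closure at a time. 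No stratification, no quotient, no induction, and no control of unions of orbit closures is needed. I would encourage you to replace the third paragraph of your argument with this flatness-criterion computation; the rest of your proposal then assembles into a correct proof.
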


If $g$ is not $k$-rational, the sheaf $g \Fsh$ can still be defined; in Section~\ref{sec-generalizations} we give this definition and a generalization  of (2) that is equivalent to (1) in any setting (see Theorem~\ref{thm-final}).

If $G$ acts transitively on $X$ in the sense of \cite{MS}, then the action is geometrically transitive, and so (1) is trivially satisfied.  Thus  Theorem~\ref{thm-MS} follows from Theorem~\ref{thm-generalgroup}.
Since transversality of smooth subvarieties in characteristic 0 implies homological transversality,  Theorem~\ref{thm-generalgroup} also  generalizes the following result of Robert Speiser:

\begin{theorem}\label{thm-Sp}
{\em \cite[Theorem~1.3]{Sp}}
Suppose that $k$ is algebraically closed of  characteristic 0.  
Let $X$ be a smooth variety, and let $G$ be a (necessarily smooth) algebraic group acting on $X$.  Let $W$ be a smooth closed subvariety of $X$.  If $W$ is transverse to every $G$-orbit in $X$, then for any smooth closed subvariety $Y \subseteq X$, there is a dense open subset $U$ of $G$ such that if $g \in U$,  then $gW$ and $Y$ are transverse.
\end{theorem}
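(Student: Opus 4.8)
The plan is to deduce Theorem~\ref{thm-Sp} from Theorem~\ref{thm-generalgroup} applied with $\Fsh=\struct_W$ and $\Esh=\struct_Y$, bracketing that application by two characteristic-$0$ translations between geometric and homological transversality: one to check that Speiser's hypothesis implies condition (1), and one to upgrade the homological conclusion (2) back to genuine geometric transversality. Throughout, $k=\kbar$, so $X\times\Spec\kbar=X$ and the $G(\kbar)$-orbit of a closed point $x$ is the usual orbit $Gx$, a smooth locally closed subvariety whose closure $\bbar{Gx}$ is stratified by $G$-orbits of strictly smaller dimension, all of which $W$ meets transversally by hypothesis.

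First I would record the elementary direction of the dictionary. If $V$ is a smooth closed subvariety of the smooth variety $X$, then locally its ideal is generated by a regular sequence $f_1,\dots,f_c$ with $c=\operatorname{codim}V$, so $\struct_V$ has a Koszul resolution and $\shTor^X_j(\struct_V,\Gsh)$ is the $j$-th Koszul homology of $f_1,\dots,f_c$ on $\Gsh$; in particular it vanishes for all $j\geq 1$ exactly when the $f_i$ form a regular sequence on $\Gsh$ at each point. When $\Gsh=\struct_{V'}$ for a second smooth subvariety $V'$ transverse to $V$, the scheme $V\cap V'$ is smooth of the expected dimension, the $f_i$ restrict to a regular sequence on $\struct_{V'}$, and hence $\struct_V$ and $\struct_{V'}$ are homologically transverse. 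This is the ``transverse $\Rightarrow$ homologically transverse'' assertion from the introduction, and I will use it at both ends of the argument.

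Next I would verify condition (1): for every closed point $x$, $\struct_W$ is homologically transverse to $Z:=\bbar{Gx}$. This is a local statement, so fix $p\in W\cap Z$. The essential point is that the local defining equations $f_1,\dots,f_c$ of $W$ form a regular sequence on $\struct_{Z,p}$ \emph{even though $Z$ need not be Cohen--Macaulay}, so that the Koszul computation above yields $\shTor^X_j(\struct_W,\struct_Z)_p=0$ for $j\geq 1$. To get this I would pass, in characteristic $0$, to an \'etale or formal slice $S$ transverse to the orbit $O_p$ through $p$, so that $Z$ is locally a product $O_p\times(Z\cap S)$; transversality of $W$ to every orbit descends to the induced orbit stratification on $S$, and an induction on $\dim O_p$ produces a local product decomposition $\widehat{\struct}_{Z,p}\cong\widehat{\struct}_{(W\cap Z),p}\,\widehat{\otimes}\,k[[t_1,\dots,t_c]]$ in which the $t_i$ are the images of $f_1,\dots,f_c$. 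Coordinates on the free factor of such a product are automatically a regular sequence, giving the desired vanishing. This slice and local-triviality step, where transversality to \emph{all} orbits and characteristic $0$ are both essential, is the crux and the main obstacle of the argument.

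Finally, with condition (1) in hand, Theorem~\ref{thm-generalgroup} supplies a dense open $U\subseteq G$ such that $\struct_{gW}$ and $\struct_Y$ are homologically transverse for all $g\in U$. This does not by itself give geometric transversality, since two smooth subvarieties can be tangent yet homologically transverse, so I would add a characteristic-$0$ upgrade. The action map $\mu\colon G\times W\to X$, $(g,w)\mapsto gw$, has surjective differential at $(g,w)$ precisely when $gW$ is transverse to the orbit through $gw$, equivalently when $W$ is transverse to the orbit through $w$; thus Speiser's hypothesis makes $\mu$ a smooth morphism, whence $\mu^{-1}(Y)$ is smooth, and by generic smoothness the generic fibre of the projection $\mu^{-1}(Y)\to G$ is smooth of dimension $\dim W+\dim Y-\dim X$. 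Translation by $g$ identifies this fibre with $gW\cap Y$, so for generic $g$ the scheme $gW\cap Y$ is smooth of the expected dimension; as $gW$ and $Y$ are smooth, this forces their tangent spaces to span at every intersection point, i.e.\ $gW$ and $Y$ are transverse. Shrinking $U$ accordingly completes the proof.
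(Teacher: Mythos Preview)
The paper does not prove Theorem~\ref{thm-Sp}: it is quoted from \cite{Sp}, and the surrounding sentence only asserts that Theorem~\ref{thm-generalgroup} \emph{generalizes} it in the homological sense. There is thus no proof in the paper to compare your attempt against, so let me instead comment on the attempt itself.

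Your step~2 is the soft spot. Condition~(1) of Theorem~\ref{thm-generalgroup} asks for homological transversality of $\struct_W$ with the (possibly singular) orbit \emph{closures} $\bbar{Gx}$, and your argument for this---an \'etale/formal slice giving a local product decomposition $\widehat\struct_{Z,p}\cong \widehat\struct_{(W\cap Z),p}\,\widehat\otimes\,k[[t_1,\dots,t_c]]$---is not justified: Luna-type slice theorems require hypotheses (e.g.\ reductive stabilizers, closed orbits) you have not arranged, and the asserted product structure of an arbitrary orbit closure along a transverse $W$ is exactly the nontrivial point. As written, this step is a gap.

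The irony is that your final paragraph already contains the clean fix and, in fact, a complete proof. You observe there that Speiser's hypothesis makes $\mu=\rho:G\times W\to X$ a smooth morphism; smooth implies flat, so $p^*\struct_W=\struct_{G\times W}$ is $\rho$-flat, which is precisely condition~(3) of Theorem~\ref{thm-final}. The implication (3)$\Rightarrow$(1) then gives your step~2 for free, with no slice argument needed. Better still, once $\rho$ is smooth, your generic-smoothness argument on $\rho^{-1}(Y)\to G$ is essentially Speiser's (and Kleiman's) original proof and establishes geometric transversality of $gW$ and $Y$ directly, without ever invoking Theorem~\ref{thm-generalgroup}. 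So the route through homological transversality is a detour: the real content sits entirely in the smoothness of $\rho$, which you identify but deploy only at the end.
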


Speiser's result implies that the generic intersection $gW \cap Y$, for $g \in U$, is also smooth.  We also give a more general homological version of this.  For simplicity, we state it here for algebraically closed fields, although in the body of the paper (see Theorem~\ref{thm-CMG}) we remove this assumption.
 
\begin{theorem}\label{ithm-CMG}
Assume that $k = \kbar$.  Let $X$ be a scheme with a left action of a smooth algebraic group $G$, and let $W$ be a Cohen-Macaulay (respectively, Gorenstein)  closed subscheme of $X$ such that 
$W$ is homologically transverse to the $G$-orbit closure of every closed point $x \in X$.  Then for any Cohen-Macaulay (respectively, Gorenstein) closed subscheme $Y$ of $X$, there is  a dense open subset $U \subseteq G$ so that
$gW $ is homologically transverse to $Y$ and $gW \cap Y$ is Cohen-Macaulay (Gorenstein) 
 for all closed points $g \in U$.
\end{theorem}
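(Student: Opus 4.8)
The plan is to realise the translates $gW$ as the fibres of a flat family over $G$, to prove that the total space of the induced family of intersections $gW\cap Y$ is Cohen--Macaulay (resp.\ Gorenstein), and then to read off the fibrewise statement by a routine flatness argument; the real content will be a local statement in commutative algebra.

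Since $k=\kbar$, the hypothesis on $W$ is precisely hypothesis~(1) of Theorem~\ref{thm-generalgroup} applied to $\Fsh=\struct_W$. Let $a\colon G\times X\to X$ be the twisted action $(g,x)\mapsto g^{-1}x$ and $p\colon G\times X\to X$ the projection, and set
\[
\mathcal W=a^{-1}(W),\qquad \mathcal Y=p^{-1}(Y)=G\times Y,\qquad \mathcal I=\mathcal W\cap\mathcal Y,
\]
the last being the scheme-theoretic intersection; the fibre of $\operatorname{pr}_G\colon\mathcal I\to G$ over a closed point $g$ is $\{g\}\times(gW\cap Y)$. The argument proving Theorem~\ref{thm-generalgroup} shows that $\mathcal W$ and $\mathcal Y$ are homologically transverse on $G\times X$ --- this is where the orbit hypothesis enters. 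By generic flatness, $\operatorname{pr}_G\colon\mathcal I\to G$ is flat over a dense open $U\subseteq G$; moreover, since $\mathcal W$ and $\mathcal Y$ are flat over $G$, base-changing the homological transversality of $\mathcal W$ and $\mathcal Y$ along the fibre over a closed point $g\in U$ (using flatness of $\operatorname{pr}_G$ and of $\mathcal I\to G$) shows that $gW$ is homologically transverse to $Y$. It therefore remains only to arrange, after shrinking $U$, that $gW\cap Y$ is Cohen--Macaulay (resp.\ Gorenstein) for $g\in U$.

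The maps $a$ and $p$ are smooth, each being an automorphism of $G\times X$ followed by $\operatorname{pr}_X$, which is smooth because $G$ is smooth over $k$; hence $\mathcal W\to W$ and $\mathcal Y\to Y$ are smooth. As smooth morphisms preserve the Cohen--Macaulay and the Gorenstein properties, $\mathcal W$ and $\mathcal Y$ are Cohen--Macaulay (resp.\ Gorenstein). Granting for the moment that $\mathcal I$ is Cohen--Macaulay (resp.\ Gorenstein), we conclude: over $U$ the morphism $\operatorname{pr}_G\colon\mathcal I\to G$ is flat with regular base (as $G$ is smooth over $\kbar$), so at any point of a fibre a regular system of parameters of the local ring of $G$ pulls back, by flatness, to a regular sequence on the local ring of $\mathcal I$ whose quotient is the local ring of the fibre; a quotient of a Cohen--Macaulay (Gorenstein) local ring by a regular sequence is again Cohen--Macaulay (Gorenstein), so every fibre of $\mathcal I|_U\to U$ is Cohen--Macaulay (Gorenstein). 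For closed $g\in U$ this fibre is $gW\cap Y$, which completes the proof.

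The crux is thus the purely local statement: if $R$ is a Noetherian local ring and $A=R/I$, $B=R/J$ are Cohen--Macaulay (resp.\ Gorenstein) $R$-modules with $\Tor^R_j(A,B)=0$ for all $j\geq1$, then $A\otimes_R B=R/(I+J)$ is Cohen--Macaulay (resp.\ Gorenstein). When $R$ is regular this is classical: vanishing of the higher $\Tor$ forces the intersection to be proper, so $\dim(A\otimes_R B)=\dim A+\dim B-\dim R$; Auslander's depth formula, available since $\operatorname{pd}_R A<\infty$, gives the matching equality of depths; and in the Gorenstein case one reduces to the diagonal, presenting $A\otimes_R B$ as a quotient of the Gorenstein ring $A\otimes_k B$ by a regular sequence. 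In our situation $G\times X$ need not be regular, and getting past this is exactly where the orbit hypothesis must be used beyond producing homological transversality: at a point $q\in\mathcal I$ lying over $x\in X$, one can hope to base-change to the orbit $Gx$ through $x$, which is a smooth homogeneous space for $G$ and along which $\struct_W$ is, by hypothesis, homologically transverse, and to reduce the local computation to a regular ambient; making this reduction precise --- and, in the Gorenstein case, propagating the dualizing data through it --- is the main obstacle, and it is the genuinely new ingredient of the theorem beyond Theorem~\ref{thm-generalgroup}.
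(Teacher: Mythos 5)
Your skeleton---realise the $gW$ as fibres of a family over $G$, show the total space $\mathcal{I}\cong (G\times W)\times_X Y$ of the family of intersections is Cohen--Macaulay (Gorenstein), then descend to the fibres over a smooth base by cutting with a regular sequence---is exactly the paper's skeleton, and your first and last steps are sound. But the step you yourself flag as the crux is a genuine gap, not a deferred routine verification: you need that $\mathcal{I}$ is Cohen--Macaulay (Gorenstein), and your proposed route to it is the local statement that a Tor-independent intersection of two Cohen--Macaulay (Gorenstein) closed subschemes of the possibly non-regular ambient $G\times X$ is again Cohen--Macaulay (Gorenstein). The paper explicitly remarks, after the proof of Theorem~\ref{thm-CMG}, that it does not know whether this statement holds in general; your sketched repair (base-changing to the orbit through a point) is not carried out, and nothing in the proposal indicates how to make it work. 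As written, the proof is incomplete at its central point.

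The paper's actual argument spends the orbit hypothesis differently and never needs Tor-independence of $\mathcal{W}$ and $\mathcal{Y}$ on $G\times X$ to establish Cohen--Macaulayness. By Theorem~\ref{thm-final}, the hypothesis on $W$ is \emph{equivalent} to flatness of $\rho\colon G\times W\to X$, $(g,w)\mapsto gw$. Since $G\times W$ is Cohen--Macaulay (Gorenstein) and $\rho$ is flat, Theorem~\ref{thm-Mats} forces the fibres of $\rho$ to be Cohen--Macaulay (Gorenstein); the base change $\rho\times 1\colon \mathcal{I}=(G\times W)\times_X Y\to Y$ is then flat with those same fibres over the Cohen--Macaulay (Gorenstein) base $Y$, so Theorem~\ref{thm-Mats} applied in the other direction gives that $\mathcal{I}$ is Cohen--Macaulay (Gorenstein). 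From there your descent over a dense open $U\subseteq G$ where $\mathcal{I}\to G$ is flat (generic flatness) works as you describe---it is one more application of Theorem~\ref{thm-Mats}. In short: flatness of $\rho$ is what transports the Cohen--Macaulay/Gorenstein property into the total space of the family; trying to get it from homological transversality of the two subschemes of $G\times X$ leads to an open problem.
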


Theorem~\ref{thm-generalgroup} was proved in the course of an investigation of certain rings, determined by geometric data, that arise in the study of noncommutative algebraic geometry.  Given  a variety $X$, an automorphism $\sigma$ of $X$ and an invertible sheaf $\Lsh$ on $X$, then Artin and Van den Bergh \cite{AV} construct a  {\em twisted homogeneous coordinate ring} $B = B(X, \sh{L}, \sigma)$.  The graded ring $B$ is  defined via $B_n = H^0(X, \Lsh \otimes_X \sigma^* \Lsh \otimes_X \cdots \otimes_X (\sigma^{n-1})^* \Lsh)$, with multiplication of sections given by the action of $\sigma$.  A closed subscheme $W$ of $X$ determines a graded right ideal $I$ of $B$, generated by sections vanishing on $W$.  In \cite{S2}, we study the {\em idealizer} of $I$; that is, the maximal subring $R$ of $B$ such that $I$ is a two-sided ideal of $R$.  It turns out that quite subtle properties of $W$ and its motion under $\sigma$ control many of the properties of $R$; in particular, for $R$ to be left Noetherian one needs that for any closed subscheme $Y$, all but finitely many $\sigma^n W$ are homologically transverse to $Y$.  (For details, we refer the reader to \cite{S2}.)  Thus we were naturally led to ask how often homological transversality can be considered ``generic'' behaviour, and what conditions on $W$ ensure this.

 We make some remarks on notation.   If  $x$ is any point of a scheme $X$, we denote the skyscraper sheaf at $x$ by $k_x$.  For schemes $X$ and $Y$, we will write $X \times Y$ for  the product $X \times_k Y$.  
Finally, if $X$ is a scheme with a (left) action of an algebraic group $G$, we will always denote the multiplication map by $\mu: G \times X \to X$.

\section{Generalizations}\label{sec-generalizations}
We begin this section by defining homological transversality  more generally.  If  $W$ and $Y$ are schemes over a scheme $X$, with (quasi)coherent sheaves $\sh{F}$ on $W$ and $\sh{E}$ on $Y$ respectively, then for all $j \geq 0$ there is a (quasi)coherent sheaf $\shTor^X_j(\sh{F}, \sh{E})$ on $W \times_X Y$.  This sheaf is defined locally.  Suppose that   $X = \Spec R$, $W = \Spec S$ and $Y = \Spec T$ are affine.  Let $(\blank)\, \widetilde{}$ denote the functor that takes an $R$-module (respectively $S$- or $T$-module) to the associated quasicoherent sheaf on $X$ (respectively $W$ or $Y$).  If $F$ is an  $S$-module and $E$ is a $T$-module, we define $\shTor^X_j(\widetilde{F}, \widetilde{E})$ to be $(\Tor^R_j(F, E))\, \widetilde{}$.  That these glue properly to give sheaves on $W \times_X Y$ for general $W$, $Y$, and $X$ is \cite[6.5.3]{G}.    As before, we will say that $\sh{F}$ and $\sh{E}$ are 
{\em  homologically transverse} if the sheaf $\shTor^X_j(\sh{F}, \sh{E})$ is zero for all $j \geq 1$. 

 We caution the reader that the maps from $W$ and $Y$ to $X$ are implicit in the definition of $\shTor_j^X(\Fsh, \Esh)$; at times we will write $\shTor_j^{W\to X \leftarrow Y}(\Fsh, \Esh)$ to make this more obvious.  
We also remark that if $Y = X$, then $\shTor^X_j(\sh{F}, \sh{E})$ is a sheaf on $W \times_X X = W$.  As localization commutes with $\Tor$, for any $w \in W$ lying over $x \in X$ we have in this case that $\shTor^X_j(\sh{F}, \sh{E})_w = \Tor^{\struct_{X, x}}_j(\Fsh_w, \Esh_x)$.

Now suppose that $f: W \to X$ is a morphism of schemes and $G$ is an algebraic group acting on $X$.   Let $\Fsh$ be a (quasi)coherent sheaf on $W$ and let $g$ be any point of $G$.  We will denote  the pullback of  $\Fsh$ to $\{ g\} \times W$ by $g \sh{F}$.    There is a map
\[ \xymatrix{
 \{ g \} \times W \ar[r] &G \times W \ar[r]^{1 \times f} & G \times X \ar[r]^{\mu} & X.}\]
If $Y$ is a scheme over $X$ and $\Esh$ is a (quasi)coherent sheaf on $Y$, we will write $ \shTor_j^X(g \Fsh, \Esh)$ for the 
 (quasi)\-coherent sheaf 
 $\shTor_j^{\{g \} \times W \to X \leftarrow Y}(g \Fsh, \Esh)$ on $W \times_X Y \times k(g)$.  
Note that if  $W = X$ and $g$ is $k$-rational, then $g \Fsh$ is simply the pushforward of $\Fsh$ along multiplication by $g$.

In this context, we prove the following relative version of Theorem~\ref{thm-generalgroup}:

\begin{theorem}\label{thm-final}
Let $X$ be a scheme with a left action of a smooth algebraic group $G$, let $f: W \to X$ be a morphism of schemes, and let $\Fsh$  be a coherent sheaf on $W$.   We define maps:
\[ \xymatrix{
G \times W \ar[r]^(.6){\rho} \ar[d]_p & X \\
W }\]
where $\rho$ is the map $\rho(g,w) = gf(w)$ induced by the action of $G$ and $p$ is projection onto the second factor.  

Then the following are equivalent:
\begin{enumerate}
\item For all closed points $x \in X \times \Spec \kbar$,  the pullback of $\Fsh$ to $W \times \Spec \kbar$ is homologically transverse to the closure of the $G(\kbar)$-orbit of $x$;
\item For all schemes $r: Y \to X$ and all coherent sheaves $\Esh$ on $Y$, there is a Zariski open and dense subset $U$ of $G$ such that for all closed points $g \in U$, the sheaf $g \Fsh$ on $\{g\} \times W$ is homologically transverse to $\Esh$.  
\item The sheaf $p^* \Fsh$ on $G \times W$ is $\rho$-flat over $X$.
\end{enumerate}
\end{theorem}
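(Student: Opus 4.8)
The plan is to prove the cycle $(1)\Rightarrow(3)\Rightarrow(2)\Rightarrow(1)$, treating the $\rho$-flatness in $(3)$ as the hinge: flatness can be tested locally and pointwise, and once it is known it forces the higher $\shTor^X$ of $p^*\Fsh$ against \emph{arbitrary} sheaves to vanish. Observe first that $p^*\Fsh=\struct_G\boxtimes\Fsh$ is automatically flat over $G$, so for any closed point $g\in G$ the restriction $g\Fsh$ of $p^*\Fsh$ to $\{g\}\times W$ equals $p^*\Fsh\otimes^L_{\struct_G}k(g)$, with no higher Tor. For $(3)\Rightarrow(2)$: given $r\colon Y\to X$ and coherent $\Esh$ on $Y$, $\rho$-flatness of $p^*\Fsh$ makes $\Gsh:=p^*\Fsh\otimes_{\struct_X}\Esh$ on $(G\times W)\times_X Y$ carry no higher Tor over $X$, and associativity of derived tensor products (the $\struct_G$- and $\struct_X$-actions being on independent factors) gives $g\Fsh\otimes^L_{\struct_X}\Esh\simeq\Gsh\otimes^L_{\struct_G}k(g)$ for every closed $g$. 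By generic flatness $\Gsh$ is flat over a dense open $U\subseteq G$, and for closed $g\in U$ the right-hand side is concentrated in degree $0$, i.e. $\shTor^X_j(g\Fsh,\Esh)=0$ for $j\ge1$; this is $(2)$.

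For $(2)\Rightarrow(1)$: apply $(2)$ with $\Esh=\struct_{\overline{Gx}}$, the structure sheaf of the orbit closure of a closed point $x\in X$, a closed and $G$-stable subscheme of $X$. Choosing a closed point $g$ in the resulting dense open and composing with the automorphism $g^{-1}$ of $X$ — which fixes $\overline{Gx}$ and carries $w\mapsto gf(w)$ to $f$ — shows that $\Fsh$, regarded on $W$ over $X$ via $f$, is homologically transverse to $\overline{Gx}$, for every closed $x\in X$. Passing to $X\times\Spec\kbar$, along which homological transversality descends faithfully, and running a short Noetherian induction over orbit closures together with a Mayer--Vietoris/\v{C}ech argument to split a $k$-rational orbit closure into its Galois-conjugate geometric pieces, one obtains $(1)$.

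The substantive implication is $(1)\Rightarrow(3)$. Since flatness over $X$ may be checked after the faithfully flat base change to $X\times\Spec\kbar$, under which $(1)$ is unchanged, we may assume $k=\kbar$. By the local criterion for flatness it is enough to show that the sheaf $\shTor^X_1(p^*\Fsh,k(x))$ computed along $\rho$ vanishes for every closed $x\in X$. Left translation of the $G$-factor of $G\times W$ preserves $p^*\Fsh$ and intertwines $\rho$ with $L_g\circ\rho$ for $L_g\in\Aut X$, so it suffices to check $\rho$-flatness at the points $(e,w)$, with $e$ the identity and base point $y=f(w)$. Using the factorization $\rho=\mu\circ(1\times f)$ and the fact that multiplication $\mu\colon G\times X\to X$ is smooth, hence flat, flat base change along $\mu$ identifies this Tor sheaf with one built from $p^*\Fsh$ and the preimage in $G\times W$ of the fibre $\mu^{-1}(y)\subseteq G\times X$. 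But $\mu^{-1}(y)\cong G$ maps to $X$ by the orbit map $g\mapsto g^{-1}y$, which factors through the orbit $Gy$ — a smooth (hence flat) locally closed subscheme of $X$, open in its closure $\overline{Gy}$. Chasing this through, the Tor sheaf in question is identified, up to a faithfully flat base change, with a restriction of $\shTor^X_j(\Fsh,\struct_{\overline{Gy}})$, which vanishes for $j\ge1$ by hypothesis $(1)$.

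I expect the hard part to be exactly this last identification, performed for an \emph{arbitrary} morphism $f\colon W\to X$: then $\rho$ itself is not flat, so one cannot base-change along it and must route everything through the flat map $\mu$ — equivalently, through the shear automorphism $(g,z)\mapsto(g,gz)$ of $G\times X$ — all while keeping careful track of the scheme structures on the fibres of $\rho$ and on the orbit and its closure. The structural point making it work is that an orbit, although only \emph{open} (not closed) in its closure, is nevertheless flat over it, so that derived pullback along the orbit map $G\to\overline{Gy}$ is computed by $\shTor^X(\Fsh,\struct_{\overline{Gy}})$ up to an exact base change.
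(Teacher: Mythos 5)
Your implications $(1)\Rightarrow(3)$ and $(3)\Rightarrow(2)$ are essentially the paper's own arguments. The first is carried out via the shear automorphism $(g,z)\mapsto(g^{-1},gz)$ of $G\times X$, the flatness of $\mu$, and the flatness of the orbit map $G\to Gx$ after reducing to $k=\kbar$ (this is Lemma~\ref{lem-comp}, combined with the local criterion for flatness). The second is Lemma~\ref{lem3} and Theorem~\ref{thm-family}: your appeal to ``associativity of derived tensor products'' is exactly what the paper's double-complex/spectral-sequence computation makes precise, using that $p^*\Fsh$ is flat over $G$ and that $p^*\Fsh\otimes_X\Esh$ is generically flat over $G$. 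Neither of these steps is a problem.

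The genuine gap is in $(2)\Rightarrow(1)$, in two places. First, the closed point $g\in U$ you select need not be $k$-rational, so ``the automorphism $g^{-1}$ of $X$'' is not available over $k$; one must base-change to $k(g)$, translate there, and then descend the vanishing of $\shTor$ back down (the paper does precisely this, via Lemma~\ref{lemff}, in its proof of $(2)\Rightarrow(3)$). Second, and more seriously, condition (2) quantifies only over finite-type $k$-schemes $Y\to X$, whereas (1) concerns the $G(\kbar)$-orbit closure of a closed point of $X\times\Spec\kbar$, which is in general not defined over $k$. The smallest $k$-subscheme it descends to pulls back over $\kbar$ to a \emph{union} of Galois-conjugate orbit closures (non-reduced in the inseparable case), and distinct conjugates can meet along smaller orbits; $\shTor$ against such a union does not decompose into $\shTor$'s against the pieces, so the proposed Mayer--Vietoris/Galois splitting does not go through. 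The paper sidesteps both difficulties by proving $(2)\Rightarrow(3)$ first: condition (3) is a flatness statement, hence unaffected by the faithfully flat base change to $\kbar$, and one then obtains (1) by reapplying $(3)\Rightarrow(2)$ \emph{over} $\kbar$ to $\Esh=\struct_{\bbar{Gx}}$ and translating by a $\kbar$-point. Your cycle can be repaired the same way: replace the direct $(2)\Rightarrow(1)$ by $(2)\Rightarrow(3)$ (test flatness against $\mu^*k_x$, which your own identification from the $(1)\Rightarrow(3)$ step converts into transversality of $\Fsh$ with $G\times\{x\}$, a legitimate finite-type $k$-scheme over $X$), followed by $(3)\Rightarrow(1)$ over $\kbar$.
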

A related relative version of Theorem~\ref{thm-Sp} is given in \cite{Sp}.

Our general approach to Theorem~\ref{thm-final} mirrors that of \cite{Sp}, although the proof techniques are quite different.  We first generalize Theorem~\ref{thm-MS} to apply to any flat map  $f: W \to X$; this is a homological version of \cite[Lemma~1]{Kl} and may be of independent interest.

\begin{theorem}\label{thm-family}
Let $X$, $Y$, and $W$ be schemes, let $A$ be a generically reduced scheme, and suppose that there are morphisms:
 \[ \xymatrix{
& Y \ar[d]^{r} \\
W \ar[r]^{f}	\ar[d]_{q}	& X \\
A. } \]
  Let $\Fsh$ be a coherent sheaf on $W$ that is $f$-flat over $X$, and let $\Esh$ be a coherent sheaf on $Y$.  For all $a \in A$, let $W_a$ denote the fiber of $W$ over $a$, and let $\Fsh_a = \Fsh \otimes_W \struct_{W_a}$ be the fiber of $\Fsh$ over $a$.   
    
Then  there is a dense open $U \subseteq A$ such that if $a \in U$, then $\Fsh_a$ is homologically transverse to $\Esh$.  
\end{theorem}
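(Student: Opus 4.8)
The plan is to reduce the statement to ordinary $\Tor$ of modules over Noetherian rings, where generic flatness does the work.

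Since homological transversality is local on $A$, $X$, $W$, and $Y$, and since ``generically reduced'' means $A$ contains a dense open subscheme that is reduced, I would first pass to affine opens and assume $A=\Spec B$ with $B$ reduced Noetherian, $X=\Spec R$, $W=\Spec S$, $Y=\Spec T$ (all of finite type over $k$), $\Fsh=\widetilde M$ with $M$ a finite $S$-module, and $\Esh=\widetilde N$ with $N$ a finite $T$-module; the $f$-flatness hypothesis becomes the assertion that $M$ is flat over $R$. Restricting to affine opens preserves $f$-flatness because it is local on $W$, and the dense opens produced on the finitely many affine charts of $A$ glue to a dense open of $A$ since $A$ is Noetherian.

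For the homological core, fix a free resolution $F_\bullet \to N$ of $N$ regarded as an $R$-module; the $F_j$ may have infinite rank and the resolution need not be finite, so no regularity hypothesis on $X$ is used. Set $C_\bullet := F_\bullet \otimes_R M$, a complex of $S$-modules, hence of $B$-modules, with each $C_j$ a direct sum of copies of $M$. Because $M$ is $R$-flat, $H_j(C_\bullet)=\Tor^R_j(N,M)=0$ for $j\geq 1$ and $H_0(C_\bullet)=P:=N\otimes_R M$ --- this is the only place the flatness hypothesis is used. For $a=\mathfrak p\in\Spec B$ the fiber $\Fsh_a$ is $\widetilde{M\otimes_B\kappa(\mathfrak p)}$, and computing $\Tor^R$ with $F_\bullet$ gives, locally,
\[ \shTor^X_j(\Fsh_a,\Esh)\;\cong\;H_j\bigl(F_\bullet\otimes_R(M\otimes_B\kappa(\mathfrak p))\bigr)\;\cong\;H_j\bigl(C_\bullet\otimes_B\kappa(\mathfrak p)\bigr). \]
Now I would invoke generic flatness over the reduced Noetherian base $B$ twice: for the finite module $M$ over the finite-type $B$-algebra $S$, and for the finite module $P=N\otimes_R M$ over the finite-type $B$-algebra $S\otimes_R T$. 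This produces a dense open $U\subseteq\Spec B$ over which both $M$ and $P$ are flat over $B$. For $\mathfrak p\in U$ each $C_j$ is then $B$-flat, so $C_\bullet\to P$ is a (possibly unbounded) resolution of $P$ by $B$-flat modules, whence $H_j(C_\bullet\otimes_B\kappa(\mathfrak p))=\Tor^B_j(P,\kappa(\mathfrak p))$; this vanishes for $j\geq 1$ because $P$ is $B$-flat over $U$. Hence $\Fsh_a$ is homologically transverse to $\Esh$ for every $a\in U$.

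I expect the genuine work to lie in the reduction step rather than in the homological algebra: choosing compatible affine covers of $X$, of $W$ and $Y$ over $X$, and of $A$; keeping track of the finite-type hypotheses so that the relevant modules and algebras are finitely generated where generic flatness is applied; and checking that the dense opens obtained on the affine charts reassemble to a dense open of $A$. The substantive external ingredient is the classical generic flatness theorem over a reduced Noetherian base; beyond that, everything rests on the single observation that $R$-flatness of $M$ turns $F_\bullet\otimes_R M$ into a $B$-flat resolution of $N\otimes_R M$.
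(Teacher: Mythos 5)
Your argument is correct, and it reaches the conclusion by a genuinely lighter route than the paper. The paper's engine is its Lemma~\ref{lem3}: working over $B' = S\otimes_k R$ (i.e.\ on $W\times X$) with projective resolutions of both $F$ and $S\otimes_k E$, it runs two spectral sequences on the double complex to prove the sheaf isomorphism $\shTor^W_j(\Fsh\otimes_X\Esh, q^*k_a)\cong\shTor^X_j(\Fsh_a,\Esh)$, and then kills the left side by applying generic flatness to $\Fsh\otimes_X\Esh$ over $A$ together with Lemma~\ref{lem1}. You instead take a single free resolution $F_\bullet\to N$ over $R$, observe that $C_\bullet=F_\bullet\otimes_R M$ resolves $P=N\otimes_R M$ precisely because of the $f$-flatness hypothesis, and note that over the generic-flatness locus of $M$ each $C_j$ is $B$-flat, so $C_\bullet$ is a flat resolution of $P$ over $B$ and hence $\shTor^X_j(\Fsh_a,\Esh)\cong\Tor^B_j(P,\kappa(\mathfrak p))$, which vanishes by generic flatness of $P$. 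The two proofs consume the same inputs --- $f$-flatness to make one complex acyclic, and generic flatness of $\Fsh$ and of $\Fsh\otimes_X\Esh$ over the generically reduced base --- but your version dispenses with the double complex, with the auxiliary module $Q=\Gamma(W,q^*k_a)$, and with the separate reduction to $W$ flat over $A$, replacing the Tor-swap $\Tor^S_j(F\otimes_R E,Q)\cong\Tor^R_j(F\otimes_S Q,E)$ by the elementary fact that bounded-below flat resolutions compute Tor. What the paper's formulation buys in exchange is the intermediate isomorphism of Lemma~\ref{lem3} itself, a base-change statement valid for all $j$ that is reused in spirit elsewhere; for Theorem~\ref{thm-family} alone, only the vanishing is needed and your argument delivers it directly. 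The one place to be careful, which you flag yourself, is the globalization: the dense opens must be produced chart by chart (affines of $A$, of $W$ over $A$ and over affines of $X$, and of $Y$ over those affines of $X$), intersected finitely within each chart of $A$, and then the union taken over the charts of $A$; this is routine since everything is of finite type over $k$ and a finite intersection of dense opens is dense.
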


We note that we have not assumed  that $X$, $Y$, $W$, or $A$ is smooth.

\section{Proofs}\label{sec-proofs}
In this section we prove Theorem~\ref{thm-generalgroup}, Theorem~\ref{thm-final}, and Theorem~\ref{thm-family}.
We begin by establishing some preparatory lemmas.

\begin{lemma}\label{lem1}
Let 
\[ \xymatrix{ X_1 \ar[r]^{\alpha} & X_2 \ar[r]^{\gamma} & X_3}\]
be morphisms of schemes, and assume that $\gamma$ is flat. Let $\sh{G}$ be a quasicoherent sheaf on $X_1$ that is flat over $X_3$.  Let $\sh{H}$ be any quasicoherent sheaf on $X_3$.  Then for all $j \geq 1$, we have $\shTor^{X_2}_j(\sh{G}, \gamma^* \sh{H}) = 0$.
\end{lemma}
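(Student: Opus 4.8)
The plan is to reduce at once to a statement about modules over commutative rings, since the $\shTor$ sheaves are defined affine-locally and the hypotheses (flatness of $\gamma$, flatness of $\sh{G}$ over $X_3$) are local. Choosing compatible affine opens, write $X_3 = \Spec R$, $X_2 = \Spec S$, $X_1 = \Spec T$, with ring maps $R \to S$ (corresponding to $\gamma$, hence flat) and $S \to T$ (corresponding to $\alpha$). Let $M$ be the $T$-module giving $\sh{G}$, so that, after the localization implicit in the choice of opens, $M$ is flat as an $R$-module; and let $N$ be the $R$-module giving $\sh{H}$. Then $\gamma^* \sh{H}$ corresponds to the $S$-module $N \otimes_R S$, and the assertion becomes: $\Tor^S_j(M, N \otimes_R S) = 0$ for all $j \geq 1$, where $M$ is viewed as an $S$-module via $S \to T$.

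The main step is a change-of-rings identification $\Tor^S_j(M, N \otimes_R S) \cong \Tor^R_j(M, N)$, with $M$ on the right regarded as an $R$-module via $R \to S \to T$. To obtain it, take a projective resolution $P_\bullet \to N$ of $N$ by $R$-modules. Since $S$ is flat over $R$, the complex $P_\bullet \otimes_R S$ is still a resolution of $N \otimes_R S$, and each $P_i \otimes_R S$ is a projective $S$-module; hence $P_\bullet \otimes_R S$ computes $\Tor^S_\ast(M, N \otimes_R S)$. The canonical isomorphism of complexes $M \otimes_S (P_\bullet \otimes_R S) \cong M \otimes_R P_\bullet$ then gives $\Tor^S_j(M, N \otimes_R S) = H_j(M \otimes_R P_\bullet) = \Tor^R_j(M, N)$.

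Finally, because $M$ is flat over $R$ by hypothesis, $\Tor^R_j(M, N) = 0$ for all $j \geq 1$, which finishes the proof. I do not anticipate a real obstacle: the content is exactly the elementary base-change computation above. The only points demanding care are bookkeeping with the two flatness hypotheses — flatness of $\gamma$ is what allows the resolution of $\sh{H}$ to be pulled back along $\gamma$ and remain a resolution, while flatness of $\sh{G}$ over $X_3$ is what annihilates the higher $\Tor$ at the end — and checking that stalkwise flatness of $\sh{G}$ over $\struct_{X_3}$ does translate into flatness of the module $M$ over the relevant localization of $R$; this last point is routine.
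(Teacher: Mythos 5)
Your proof is correct and follows the same route as the paper: localize, identify $\gamma^*\sh{H}$ with $S\otimes_R \sh{H}_z$, apply flat base change for $\Tor$ along the flat map $R\to S$, and conclude from the flatness of $\sh{G}$ over $X_3$. The only difference is that you write out the flat base change isomorphism explicitly by resolving $N$ over $R$, where the paper simply cites it.
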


\begin{proof} We may reduce to the local case.  Thus let $x \in X_1$ and let $y = \alpha(x)$ and $z = \gamma(y)$. Let $S = \struct_{X_2,y}$ and let $R = \struct_{X_3, z}$.  Then  $(\gamma^* \sh{H})_y \cong S \otimes_R \sh{H}_{z}$.  Since  $S$ is flat over $R$,   we have
\[\Tor_j^{R}(\sh{G}_x, \sh{H}_{z}) 
 \cong 
\Tor_j^{S}(\sh{G}_x, S \otimes_R \sh{H}_{z}) 
= \shTor_j^{X_2}(\sh{G}, \gamma^* \sh{H})_{x}\]
by flat base change.
The left-hand side is 0 for $j \geq 1$ since $ \sh{G}$ is flat over $X_3$.  
Thus for $j \geq 1$ we have 
$\shTor_j^{X_2}(\sh{G}, \gamma^* \sh{H}) = 0$.  
\end{proof}

To prove Theorem~\ref{thm-family}, we show that a suitable modification of the spectral sequences used in \cite{MS} will work in our situation.  Our key computation is the following lemma; compare to \cite[Proposition~2]{MS}.

\begin{lemma} \label{lem3}

Given the notation of Theorem~\ref{thm-family}, there is an open dense $U \subseteq A$ such that for all $a \in U$ and for all $j \geq 0$ we have 
\[ \shTor^W_j( \Fsh \otimes_X  \Esh, q^*k_a) \cong \shTor_j^X( \Fsh_a, \Esh)\]
as sheaves on $W \times_X Y$.
\end{lemma}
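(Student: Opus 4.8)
The plan is to reduce to the affine situation and then compute both sides by a single resolution. Localize so that $X = \Spec R$, $W = \Spec S$, $Y = \Spec T$, and $A = \Spec C$, with $\Fsh = \widetilde{M}$ for an $S$-module $M$ that is $f$-flat over $X$, $\Esh = \widetilde{N}$ for a $T$-module $N$, and $q: S \to C$ inducing the structure map. Here $S$ is an $R$-algebra and also a $C$-algebra; $T$ is an $R$-algebra; and $\Fsh \otimes_X \Esh$ corresponds to $M \otimes_R N$, a module over $S \otimes_R T$. The two sides we must identify are $\Tor^S_j(M \otimes_R N,\, S \otimes_C k(a))$ and $\Tor^R_j(M \otimes_C k(a),\, N)$, and we want these to agree for $a$ in a dense open subset of $A$.

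The key idea is to pick a bounded-above complex of flat (say, free) $R$-modules $P_\bullet \to M$ resolving $M$ \emph{as an $R$-module}, but chosen compatibly with the $C$-module structure — concretely, first choose a finite free resolution of $C$ over itself is trivial, so instead resolve $M$ over the ring $R \otimes_k C$ acting through $S$, or more simply: since $M$ is $f$-flat over $X$, i.e.\ flat as an $R$-module, the generic reducedness of $A$ lets us find, after shrinking $A$, a complex of $S$-modules $Q_\bullet \to M$ that is simultaneously a flat resolution over $R$ and has the property that $Q_\bullet \otimes_C k(a)$ is a flat resolution of $M \otimes_C k(a)$ over $R$ for all $a \in U$. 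This is the point where generic reducedness of $A$ (or at least generic flatness over $A$) is essential: one spreads out so that the homology of $Q_\bullet$ is flat over $C$ except in degree $0$, and then base change to the residue fields $k(a)$ commutes with taking homology. This is the homological analogue of \cite[Lemma~1]{Kl} alluded to before Theorem~\ref{thm-family}, and it is exactly the kind of argument used in \cite[Proposition~2]{MS}; I would follow that model.

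Granting such a $Q_\bullet$, both sides become the homology of the same complex. On one side, $\Tor^R_j(M \otimes_C k(a), N)$ is computed by $(Q_\bullet \otimes_C k(a)) \otimes_R N$, using that $Q_\bullet \otimes_C k(a)$ is an $R$-flat resolution of $M \otimes_C k(a)$. On the other side, $\shTor^W_j(\Fsh \otimes_X \Esh, q^* k_a)$ is $\Tor^S_j(M \otimes_R N, S \otimes_C k(a))$, and since $S \otimes_C k(a)$ has a resolution over $S$ obtained from one of $k(a)$ over $C$, a flat base change / associativity-of-$\Tor$ argument (Lemma~\ref{lem1}-style, plus the standard spectral sequence $\Tor^S_p(\Tor^C_q(\,\cdot\,), \cdot)$ collapsing because $M \otimes_R N$ is built from the $R$-flat $Q_\bullet$) rewrites this as the homology of $(Q_\bullet \otimes_R N) \otimes_C k(a) = (Q_\bullet \otimes_C k(a)) \otimes_R N$. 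So both sides are $H_j$ of the identical complex, giving the isomorphism; naturality of all the identifications shows it is an isomorphism of sheaves, and the construction is local-friendly enough to glue over a finite affine cover, intersecting the finitely many dense opens of $A$.

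The main obstacle is the simultaneous-resolution step: producing $Q_\bullet$ that is $R$-flat, resolves $M$, \emph{and} stays a resolution after $\otimes_C k(a)$ for generic $a$. This requires generic flatness of the higher homology of a candidate complex over $C$ and commuting base change with homology — which is precisely why $A$ is assumed generically reduced. Everything after that is bookkeeping with $\Tor$ spectral sequences and flat base change, for which Lemma~\ref{lem1} and the classical references suffice. I would expect to spend most of the write-up carefully setting up $Q_\bullet$ and checking that shrinking $A$ finitely many times handles all the $\Tor$ degrees and all the affine charts.
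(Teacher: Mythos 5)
Your overall strategy---reduce to the affine case, use generic flatness over the generically reduced $A$, and then compute both Tor groups from a single well-chosen complex---is the right one and is close in spirit to the paper's argument. But the proposal leaves its central object unconstructed, and the one concrete suggestion you make for it would fail. You need a resolution $Q_\bullet \to M$ whose terms simultaneously (i) carry an $S$-module structure, since otherwise the complex $(Q_\bullet \otimes_R N)\otimes_S(S\otimes_C k(a))$ that is supposed to compute $\Tor^S_j(M\otimes_R N,\, S\otimes_C k(a))$ does not even parse; (ii) are flat over $R$; and (iii) remain a resolution after $\otimes_C k(a)$. Resolving $M$ over $R\otimes_k C$, as you suggest, produces terms with no $S$-action, so the left-hand side cannot be computed from it; while a free resolution over $S$ has terms that need not be $R$-flat, because only $\Fsh$, not $W$ itself, is assumed flat over $X$. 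Your phrase ``the homology of $Q_\bullet$ is flat over $C$ except in degree $0$'' is also confused: a resolution has no higher homology; what must be arranged is that the terms (equivalently the syzygies) and $M$ itself are $C$-flat, so that $Q_\bullet\otimes_C k(a)$ stays exact.

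The paper supplies exactly the missing device: resolve $F$ by projective modules over $B = S\otimes_k R$. These are $S$-modules via $s\mapsto s\otimes 1$, are $R$-flat since $B$ is free over $R$, and become $C$-flat once generic flatness has shrunk $A$ so that $S$ and $F$ are flat over $C$---the same shrinking you invoke. With that resolution your single-complex identification can in fact be pushed through: the terms $K_i\otimes_R N\cong (S\otimes_k N)^{n_i}$ are $S$-flat, so the complex $K_\bullet \otimes_R N$ computes the $\Tor^S$ side after applying $\otimes_C k(a)$, and $K_\bullet\otimes_C k(a)$ is an $R$-flat resolution of $F\otimes_C k(a)$ computing the $\Tor^R$ side. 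The paper proceeds slightly differently, taking a second $B$-projective resolution $L_\bullet$ of $S\otimes_k E$, forming the double complex $K_\bullet\otimes_B L_\bullet\otimes_S Q$ with $Q=S\otimes_C k(a)$, and running the two spectral sequences, with the collapses supplied by $\Tor^R_j(F,E)=0$ on one side and by Lemma~\ref{lem1} (giving $\Tor^S_i(F,Q)=0$) on the other. Either route works, but as written your proposal stops exactly where the proof has to begin.
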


Note that $\Fsh \otimes_X \Esh$ is a sheaf on $W \times_X Y$ and thus $\shTor^W_j(\Fsh \otimes_X \Esh, q^* k_a)$ is a sheaf on $W \times_X Y \times_W W = W \times_X Y$ as required.

\begin{proof}
Since $A$ is generically reduced, we may apply generic flatness to the morphism $q: W\to A$.  Thus there is an open dense subset $U$ of $A$ such that  both $W$ and $\Fsh$ are flat over $U$.   Let $a \in U$.   Away from $q^{-1} (U)$, both sides of the equality we seek to establish are zero, and so the result is trivial.  Since $\sh{F}|_{q^{-1}(U)}$ is still flat over $X$,  without loss of generality we may replace $W$ by $q^{-1}(U)$; that is, we may assume that both $W$ and $\Fsh$ are flat over $A$.

The question is local, so assume that $X = \Spec R$, $Y = \Spec T$, and $W = \Spec S$ are affine.  Let $E = \Gamma(Y, \sh{E})$ and let $F = \Gamma(W, \sh{F})$.  Let $Q = \Gamma(W, q^*k_a)$; then $\Gamma(W, \Fsh_a) = F \otimes_S Q$.  We seek to show that 
\[ \Tor^S_j(F \otimes_R E, Q) \cong \Tor^R_j(F \otimes_S Q, E)\]
as $S \otimes_R T$-modules.

We will work on $W \times X$.  For clarity, we lay out the various morphisms and corresponding ring maps in our situation.   We have morphisms of schemes
\[ \xymatrix{
W \times X \ar[d]_p	& Y \ar[d]^r \\
W \ar@/_1pc/[u]_{\phi} \ar[r]_f		& X }\]
where $p$ is projection onto the first factor and the morphism ${\phi}$ splitting $p$ is given by the graph of $f$.  Letting $B = S \otimes_k R$, we have corresponding  maps of rings
\[ \xymatrix{
B \ar@/^1pc/[d]^{{\phi}^{\#}}	& T \\
S \ar[u]^{p^{\#}}	& R, \ar[u]_{r^{\#}} \ar[l]^{f^{\#}} }\]
where $p^{\#}(s) = s \otimes 1$ and ${\phi}^{\#}(s \otimes r) = s \cdot f^{\#} (r)$.   
We make the trivial observation that
\[ B \otimes_R E = (S \otimes_k R) \otimes_R E \cong S \otimes_k E.\]

 Let $K_{\bullet} \to F$ be a projective resolution of $F$, considered as a $B$-module via the map $\phi^{\#}: B \to S$.   As $E$ is an $R$-module via the map $r^{\#}:R \to T$, there is a $B$-action on $S \otimes_k E$; let $L_{\bullet} \to S \otimes_k E$ be a projective resolution over $B$.  

Let $P_{\bullet, \bullet}$ be the double complex $K_{\bullet} \otimes_B L_{\bullet}$.   We claim the  total complex of $P_{\bullet, \bullet}$ resolves $F \otimes_B (S \otimes_k E)$.   To see this, note that the rows of $P_{\bullet, \bullet}$, which are of the form $K_{\bullet} \otimes_B L_j$, are acyclic, except in degree 0, where the homology is $F \otimes_B L_j$.  The degree 0 horizontal homology forms a vertical complex whose homology computes $\Tor^B_j(F, S \otimes_k E)$.  But $S \otimes_k E \cong B \otimes_R E$, and $B$ is a flat $R$-module.  Therefore 
$\Tor^B_j(F, S\otimes_k E) \cong \Tor^B_j(F, B \otimes_R E) \cong \Tor^R_j(F, E)$ by the formula for flat base change for Tor.  Since $F$ is flat over $R$, this is zero for all $j \geq 1$.   Thus, via the spectral sequence
\[ H^v_j (H^h_i P_{\bullet, \bullet}) \Rightarrow H_{i+j} \Tot P_{\bullet, \bullet} \]
we see that the total complex of $P_{\bullet, \bullet}$ is acyclic, except in degree 0, where the homology is $F \otimes_B S \otimes_k E \cong F \otimes_R E$.

Consider the double complex $P_{\bullet, \bullet} \otimes_S Q$.  Since $\Tot P_{\bullet, \bullet}$ is a $B$-projective and therefore $S$-projective resolution of $F \otimes_R E$, the homology of the total complex of this double complex computes $\Tor^S_j(F \otimes_R E, Q)$.  

Now consider the row $K_\bullet \otimes_B L_j \otimes_S Q$.  As $L_j$ is $B$-projective and therefore $B$-flat, the $i$'th homology of this row is isomorphic to  $\Tor^S_i(F, Q) \otimes_B L_j$. Since $W$ and $\Fsh$ are flat over $A$, by Lemma~\ref{lem1} we have $\Tor^S_i(F, Q) = 0$ for all $i \geq 1$.  Thus this row is acyclic except in degree 0, where the homology is $F \otimes_B L_j \otimes_S Q$.
The vertical differentials on the degree 0 homology give a complex whose $j$'th homology is isomorphic to  $\Tor^B_j(F \otimes_S Q, S \otimes_k E)$.  As before, this is simply $\Tor^R_j(F \otimes_S Q, E)$.

Thus (via a spectral sequence) we see that the homology of the total complex of $P_{\bullet, \bullet} \otimes_S Q$ computes $\Tor^R_j(F \otimes_S Q, E)$.  But we have already seen that the homology of this total complex is isomorphic to $\Tor^S_j(F \otimes_R E, Q)$.  Thus the two are isomorphic.  
\end{proof}

\begin{proof}[Proof of Theorem~\ref{thm-family}]
By generic flatness, we may reduce without loss of generality to the case where $W$ is flat over $A$.  
Since $\Fsh$ and $\Esh$ are coherent sheaves on $W$ and $Y$ respectively, $\Fsh \otimes_X \Esh$ is a coherent sheaf on $W \times_X Y$.  Applying generic flatness to the composition $W \times_X Y \to W \to A$, we obtain a dense open $V \subseteq A$ such that $\sh{F} \otimes_X  \Esh$ is flat over $V$.   Therefore, by Lemma~\ref{lem1}, if $a \in V$ and $j \geq 1$, we have $\shTor^W_j(\sh{F} \otimes_X \Esh, q^*k_a) = 0$.

We apply Lemma~\ref{lem3} to choose a dense open $U \subseteq A$ such that  for all $j \geq 1$, if 
$a \in U$, then $\shTor^W_j(\Fsh \otimes_X \Esh, q^*k_a) \cong  \shTor_j^X(\Fsh_a,\Esh)$.  Thus if 
$a$ is in the  dense open set $U \cap V$, then  for all $j \geq 1$ we have 
\[  \shTor_j^X(\Fsh_a, \Esh) \cong \shTor^W_j(\sh{F} \otimes_X  \Esh, q^*k_a) = 0,\]
as required.
\end{proof}

We now turn to the proof of Theorem~\ref{thm-final}; for the remainder of this paper, we will adopt the hypotheses and notation given there.  

\begin{lemma}\label{lemff}
Let $R, R', S$, and $T$ be commutative rings, and let 
\[ \xymatrix{
R' \ar[r]	& T \\
R \ar[u] \ar[r]& S \ar[u]   }\]
be a commutative diagram of ring homomorphisms, such that $R'_R$  and $T_S$ are flat.  Let $N$ be an $R$-module.  Then for all $j \geq 0$, we have that
\[\Tor_j^{R'}(N \otimes_R R', T) \cong \Tor^R_j(N, S) \otimes_S T.\]
\end{lemma}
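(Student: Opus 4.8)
The plan is to prove the lemma by picking a single resolution over $R$ and applying flat base change for Tor twice. First I would choose a projective (indeed one may take free) resolution $P_\bullet \to N$ of $N$ as an $R$-module. Since $R'$ is flat over $R$, the functor $(\blank) \otimes_R R'$ is exact, so $P_\bullet \otimes_R R' \to N \otimes_R R'$ is a resolution of $N \otimes_R R'$ by $R'$-modules, each of which is projective over $R'$ (a direct summand of a free $R$-module becomes, after $\otimes_R R'$, a direct summand of a free $R'$-module). Hence this complex computes Tor over $R'$, and
\[
\Tor_j^{R'}(N \otimes_R R', T) \cong H_j\big((P_\bullet \otimes_R R') \otimes_{R'} T\big) \cong H_j(P_\bullet \otimes_R T),
\]
where $T$ is viewed as an $R$-module through the composite $R \to R' \to T$, which by commutativity of the square equals the composite $R \to S \to T$.

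Next I would factor the tensor product through $S$, writing $P_\bullet \otimes_R T \cong (P_\bullet \otimes_R S) \otimes_S T$, using that $T$ is an $S$-algebra. Since $T$ is flat over $S$, the functor $(\blank) \otimes_S T$ commutes with homology, so
\[
H_j\big((P_\bullet \otimes_R S) \otimes_S T\big) \cong H_j(P_\bullet \otimes_R S) \otimes_S T = \Tor_j^R(N, S) \otimes_S T,
\]
which is precisely the asserted isomorphism. The identification is natural and independent of the chosen resolution by the usual comparison-of-resolutions argument, so no extra care is needed on that point.

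The only thing to watch is the bookkeeping of module structures: one must check that the $R'$-action on $T$ used in the first step and the $S$-action on $T$ used in the second step are compatible via the commutative square, so that the complex $P_\bullet \otimes_R T$ may legitimately be read both as $(P_\bullet \otimes_R R') \otimes_{R'} T$ and as $(P_\bullet \otimes_R S) \otimes_S T$; this is immediate from commutativity. I do not expect any genuine obstacle here — the statement is a routine double application of flat base change for $\Tor$, with flatness of $R'_R$ used to transport the resolution and flatness of $T_S$ used to pull the second base change outside the homology.
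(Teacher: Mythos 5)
Your proof is correct and is essentially identical to the paper's: both take a projective resolution $P_\bullet \to N$ over $R$, use flatness of $R'_R$ to see that $P_\bullet \otimes_R R'$ resolves $N \otimes_R R'$ by projective $R'$-modules, rewrite $P_\bullet \otimes_R R' \otimes_{R'} T \cong P_\bullet \otimes_R S \otimes_S T$, and use flatness of $T_S$ to pull the tensor outside the homology. No issues.
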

\begin{proof}
Let $P_{\bullet} \to N$ be a projective resolution of $N$.   Consider  the complex
\beq \label{starcx}
P_{\bullet} \otimes_R R' \otimes_{R'} T \cong  P_{\bullet} \otimes_R T \cong P_{\bullet} \otimes_R S \otimes_S T.
 \eeq
Since $R'_R$ is flat,  $P_{\bullet} \otimes_R R'$ is a projective resolution of $N \otimes_R R'$.  Thus the j'th homology of \eqref{starcx} computes $\Tor_j^{R'} (N \otimes_R R', T)$.  Since $T_S$ is flat, this homology is isomorphic to 
$H_j(P_{\bullet} \otimes_R S) \otimes_S T$.  Thus  $\Tor_j^{R'}(N \otimes_R R', T) \cong \Tor^R_j(N, S) \otimes_S T$.   
\end{proof}

\begin{lemma}\label{lem-comp}
Let $x$ be a closed point of $X$.   Consider the multiplication map 
\[\mu_x: G \times \{ x\} \to X.\] 
Then for all $j \geq 0$ we have
\beq\label{firsteq}
\shTor^X_j(\Fsh, \struct_{G \times \{x\}}) \cong \shTor^{G \times X}_j(p^* \Fsh, \mu^* k_x)
\eeq
If $k$ is algebraically closed, then we also have 
\beq\label{secondeq}
 \shTor^{G \times X}_j(p^* \Fsh, \mu^* k_x) \cong \shTor^X_j(\Fsh, \struct_{\bbar{Gx}}) \otimes_X \struct_{G\times\{x\}}.
\eeq
 All isomorphisms are of sheaves on $G \times W$.
\end{lemma}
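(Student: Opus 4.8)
The plan is to deduce \eqref{firsteq} from flat base change for $\shTor$ along the multiplication map $\mu\colon G\times X\to X$, and then to derive \eqref{secondeq} from \eqref{firsteq} using standard facts about orbit maps. The only flatness input needed is that $G$, being a smooth algebraic group, is flat over $k$, so that $\mu$ is flat.

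For \eqref{firsteq}, I would first write $\mu_x$ as the composite of the closed immersion $G\times\{x\}\hookrightarrow G\times X$ (whose image is the fibre $\pi_X^{-1}(x)$ of the projection $\pi_X\colon G\times X\to X$) with $\mu$; thus $\struct_{G\times\{x\}}$, viewed as a sheaf on $X$ via $\mu_x$, equals $\mu_*\pi_X^{*}k_x$. Applying flat base change for $\shTor$ along the flat morphism $\mu$ (which holds by the projective-resolution argument used to prove Lemma~\ref{lemff}) and identifying $G\times W$ with $W\times_X(G\times X)$, the fibre product being formed along $\mu$, then turns $\shTor^X_j(\Fsh,\struct_{G\times\{x\}})$ into $\shTor^{G\times X}_j(p^{*}\Fsh,\pi_X^{*}k_x)$: the identification carries the projection $W\times_X(G\times X)\to W$ to $p$, so the pullback of $\Fsh$ is indeed $p^{*}\Fsh$, but it carries $p^{*}\Fsh$ to $G\times X$ by the twisted map $(g,w)\mapsto(g,g^{-1}f(w))$ rather than by $1\times f$. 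I would remove this twist with the automorphism $\Theta$ of $G\times X$ given by $\Theta(g,y)=(g^{-1},gy)$, an involution built from inversion and $\mu$: it sends $\pi_X^{-1}(x)$ to $\mu^{-1}(x)$ and, once combined with the inversion isomorphism of $G\times W$, sends the twisted map to $1\times f$, whence $\shTor^{G\times X}_j(p^{*}\Fsh,\pi_X^{*}k_x)\cong\shTor^{G\times X}_j(p^{*}\Fsh,\mu^{*}k_x)$. Along the way one checks that the fibre products implicit on the two sides of \eqref{firsteq} — namely $W\times_X(G\times\{x\})$ on the left and $(G\times W)\times_{G\times X}(G\times X)$ on the right — are both identified with the closed subscheme $\rho^{-1}(x)$ of $G\times W$, so the comparison is genuinely one of sheaves on $G\times W$.

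For \eqref{secondeq}, assume $k=\kbar$; then $x$ is a rational point, the orbit $Gx$ is a smooth locally closed subscheme of $X$ with $\mu_x$ equal to the orbit map $G\to Gx$ followed by the inclusion $Gx\hookrightarrow X$, this orbit map is faithfully flat (being the fppf quotient $G\to G/G_x$), and $Gx$ is open in its closure $\bbar{Gx}$. Since $\shTor$ commutes with localization (for the open immersion $Gx\hookrightarrow\bbar{Gx}$) and, by Lemma~\ref{lemff} with $R'=R$, with flat base change in the second variable (for the orbit map), I obtain
\[\shTor^X_j(\Fsh,\struct_{G\times\{x\}})\;\cong\;\shTor^X_j(\Fsh,\struct_{\bbar{Gx}})\otimes_{\bbar{Gx}}\struct_{G\times\{x\}};\]
and because the structure map $G\times\{x\}\to X$ factors through $\bbar{Gx}$, the tensor product over $\struct_{\bbar{Gx}}$ on the right agrees with $\otimes_X\struct_{G\times\{x\}}$. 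Combining with \eqref{firsteq} then yields \eqref{secondeq}.

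The individual homological-algebra manipulations are routine; the point requiring care — and the main obstacle — is the bookkeeping of the implicit structure morphisms and the attendant fibre products, in particular checking that after flat base change and the twist by $\Theta$ one lands precisely on $\shTor^{G\times X}_j(p^{*}\Fsh,\mu^{*}k_x)$ with $p^{*}\Fsh$ carried by $1\times f$, all as sheaves on $G\times W$. The only input beyond flatness of $\mu$ that is not purely formal is the faithful flatness of the orbit map $G\to Gx$ used for \eqref{secondeq}, and it is exactly this that forces the hypothesis $k=\kbar$ there.
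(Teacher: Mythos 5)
Your argument is correct and follows essentially the same route as the paper: flat base change for $\shTor$ (Lemma~\ref{lemff}) combined with the involution $\psi=\nu\times\mu$ of $G\times X$ (your $\Theta$) to identify $\mu^{*}k_x$ with the pushforward of $\struct_{G\times\{x\}}$, plus flatness of the orbit map $G\to Gx$ over $\kbar$ for the second isomorphism. The only cosmetic difference is that you base-change along $\mu$ and then untwist by $\Theta$, whereas the paper first rewrites $\mu^{*}k_x\cong\psi_{*}\struct_{G\times\{x\}}$ and base-changes along the flat projection $p$.
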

\begin{proof}
Note that $\mu_x$ maps $G \times \{x\}$ onto a locally closed subscheme of $X$, which we will denote $Gx$.   Since all computations may be done locally,  without loss of generality we may assume that $Gx$ is in fact a closed subscheme of $X$.  

 Let $\nu:  G \to G$ be the inverse map, and let $\psi = \nu \times \mu: G \times X \to G\times X$.  
Consider the  commutative diagram:
\beq\label{diag1} \xymatrix{
G \times W \ar[d]_p \ar[r]^{1 \times f} & G\times X \ar[d]_p	& G \times \{x\} \ar[l]_{\psi} \ar[d]^{\pi} \ar[ld]_{\mu_x} \\
W \ar[r]_{f} & X & Gx \ar[l]_{\supseteq} }\eeq
where  $\pi$ is the induced map and $p$ is projection onto the second factor.    
Since $\psi^2  = \id_{G \times X}$ and $\mu = p \circ \psi$, we have that
$\mu^* k_x \cong \psi^* p^* k_x \cong \psi_* \struct_{G \times \{x\}}$, considered as sheaves on $G \times X$.  
Then the isomorphism \eqref{firsteq} is a direct consequence of the flatness of $p$ and Lemma~\ref{lemff}.  If $k$ is algebraically closed, then $\pi$ is also flat, and so the isomorphism 
 \eqref{secondeq} 
also follows from Lemma~\ref{lemff}.
 \end{proof}

\begin{proof}[Proof of Theorem~\ref{thm-final}]
(3) $\Rightarrow$ (2).  Assume (3).    
Let $\Esh$ be a coherent sheaf on $Y$.  Consider the maps:
\[ \xymatrix{
& Y \ar[d]^r \\
G \times W \ar[r]^(.6){\rho} \ar[d]_q	& X \\
G,}\]
where $q$ is projection on the first factor.  

Since $G$ is smooth, it is generically reduced.   
 Thus we may apply Theorem~\ref{thm-family} to the $\rho$-flat sheaf $p^* \sh{F}$ to obtain  a dense open $U \subseteq G$ such that if $g \in U$ is a closed point, then $\rho$ makes $(p^* \sh{F})_g$ homologically transverse to $\Esh$.   But $\rho |_{\{g\} \times W}$ is the map used to define $\shTor_j^X(g \Fsh, \Esh)$;
 that is, considered as sheaves over $X$, $(p^*\Fsh)_g \cong g\Fsh$.  Thus (2) holds.

(2) $\Rightarrow$ (3).  The morphism $\rho$ factors as
\[ \xymatrix{ G \times W \ar[r]^{1 \times f} & G \times X \ar[r]^{\mu} & X.}\]
  Since the multiplication map $\mu$ is  the composition of an automorphism  of $G \times X$  and projection, it is flat.    
Therefore for any quasicoherent $\sh{N}$ on $X$ and $\sh{M}$ on $G \times W$ and for any closed point $z \in G \times W$, we have 
\beq\label{dag}
\shTor^{G \times X}_j(\sh{M}, \mu^* \sh{N})_{z} \cong \shTor_j^{\struct_{X, \rho(z)}}(\sh{M}_{z}, \sh{N}_{\rho(z)}),
\eeq
 as in the proof of Lemma~\ref{lem1}.  
 
If $p^* \Fsh$ fails to be flat over $X$, then  flatness fails against the structure sheaf of some closed point $x \in X$, by the local criterion for flatness \cite[Theorem~6.8]{Eis}.   Thus to check that  $p^* \Fsh$ is flat over $X$, it is equivalent to test flatness  against structure sheaves of closed points of $X$.   
By \eqref{dag}, we see that $p^* \sh{F}$ is $\rho$-flat over $X$ if and only if 
\beq \label{blah}
\shTor^{G \times X}_j(p^* \Fsh, \mu^* k_x) = 0  \quad \mbox{ for all closed points $x \in X$ and for all $j \geq 1$. }
\eeq 
Applying Lemma~\ref{lem-comp}, we see that the flatness of $p^* \Fsh$ is  equivalent to the vanishing
\beq \label{biff}
\shTor^{X}_j( \Fsh, \struct_{G \times \{x\}}) = 0  \quad \mbox{ for all closed points $x \in X$ and for all $j \geq 1$. }
\eeq 

Assume (2).  We will show that \eqref{biff} holds for all $x \in X$.  Fix a closed point $x \in X$ and consider the morphism $ \mu_x: G \times \{x \}\to X$. 
By assumption, there is a closed point $g \in G$ such that $g \Fsh$ is homologically transverse to $\struct_{G \times \{x\}}$.   Let $k' = k(g)$ and let $g'$ be the canonical $k'$-point of $G \times \Spec k'$ lying over $g$. Let $G' = G \times \Spec k'$ and let $X' = X \times  \Spec k'$.  Let $\Fsh'$ be the pullback of $\Fsh$ to $W' = W \times \Spec k'$.  Consider the commutative diagram
 \[ \xymatrix{
G \times \{ x\} \times \Spec k'  \ar[rr]^(.6) {\mu_x \times 1} \ar[d] 	&&  X' \ar[d]	&  \{ g' \} \times_{k'} W' \ar[l]_(.65){\rho} \ar[d]_{\cong}\\
 G \times \{x\} \ar[rr]^(.6){\mu_x}	&& X &  \{ g \} \times W. \ar[l]_(.6){\rho}  }\]

  Since the vertical maps are faithfully flat and the left-hand square is a fiber square, by Lemma~\ref{lemff}  $g' \Fsh'$ is homologically transverse to 
  \[G \times \{x\} \times \Spec k' \cong G' \times \{x\}.\]
    By $G(k')$-equivariance,  $\Fsh'$ is homologically transverse to $(g')^{-1} G' \times \{x\} = G' \times \{x\}$.  Since 
  \[ \xymatrix{G' \times \{x\} \ar[r] & X' & W'  \ar[l]_f }\]
  is base-extended from 
  \[ \xymatrix{G \times \{x\} \ar[r]& X & W, \ar[l]_f }\]
  we obtain that $\Fsh$ is homologically transverse to $G \times \{x\}$.  Thus \eqref{biff} holds.
  
  (1) $\Rightarrow$ (3).   
  The $\rho$-flatness of $\Fsh$ is not affected by base extension, so without loss of generality we may assume that $k$ is algebraically closed.  Then (3) follows directly from Lemma~\ref{lem-comp} and the criterion \eqref{blah} for flatness.  
  
  (3) $\Rightarrow$ (1).  As before, we may assume that $k$ is algebraically closed.  Let $x$ be a closed point of $X$.  We have seen that (3) and (2) are equivalent; by (2) applied to $\Esh = \struct_{\bbar{Gx}}$ there is a closed point $g \in G$ such that $g \Fsh$ and $\bbar{Gx}$ are homologically transverse.  By $G(k)$-equivariance, $\Fsh$ and $g^{-1} \bbar{Gx} = \bbar{Gx}$ are homologically transverse.  
 \end{proof}

\begin{proof}[Proof of Theorem~\ref{thm-generalgroup}]
If $\Fsh$ is homologically transverse to orbit closures upon extending to $\kbar$, then, using Theorem~\ref{thm-final}(2), for any $\Esh$ there is a dense open $U \subseteq G$ such that, in particular, for any $k$-rational $g \in U$ the sheaves $g \Fsh$ and $\Esh$ are homologically transverse.

The equivalence of (1) and (2) in the case that $k$ is algebraically closed follows directly from Theorem~\ref{thm-final}.
\end{proof}

We recall that Theorem~\ref{thm-generalgroup} is a statement about $k$-rational points in $U \subseteq G$.  However, the proof shows that for any extension $k'$ of $k$ and any $k'$-rational $g \in U \times \Spec k'$, then $g \Fsh$ will be  homologically transverse to $\Esh$  on $X \times \Spec k'$.     Further, in many situations $U$ will automatically contain a $k$-rational point of $G$.  This holds, in particular, if $k$ is infinite, $G$ is connected and affine, and either  $k$ is perfect or $G$ is reductive, by \cite[Corollary~18.3]{B}.

\section{Singularities of generic intersections}
We now specialize to consider generic intersections of two subschemes of $X$.  That is, let $X$ be a scheme with a left action of a smooth algebraic group $G$.  Let $Y$ and $W$ be closed subschemes of $X$.  
By Theorem~\ref{thm-Sp}, if $k$ is algebraically closed of characteristic 0, $W$ is transverse to $G$-orbit closures, and  $X$, $Y$, and $W$ are smooth, then for generic $g \in G$ the subschemes $gW$ and $Y$ meet transversally, and so by definition $gW \cap Y$ is smooth.  Here we remark that a homological version of this  result holds more generally:  if  $W$ is homologically transverse to $G$-orbit closures and $Y$ and $W$ are  Cohen-Macaulay (respectively, Gorenstein), then their generic intersection will  also be Cohen-Macaulay (Gorenstein).  We use the following result from commutative algebra:
\begin{theorem}\label{thm-Mats}
Let $A \to B$ be a local homomorphism of Noetherian local rings, and let $\mf{m}$ be the maximal ideal of $A$ and $F = B/\mf{m}B$.  Assume that $B$ is flat over $A$.  Then $B$ is Cohen-Macaulay (respectively, Gorenstein) if and only if $B$ and $F$ are both Cohen-Macaulay (respectively, Gorenstein).
\end{theorem}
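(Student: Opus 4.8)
Since Theorem~\ref{thm-Mats} is a standard fact from commutative algebra, the plan is to recall the argument rather than develop anything new; the point is to reduce the Cohen--Macaulay and Gorenstein conditions to invariants that behave additively, respectively multiplicatively, along the flat local homomorphism $A \to B$ with closed fibre $F = B/\mathfrak{m}B$. (I read the ``both'' in the statement as referring to the source $A$ and the fibre $F$.) The two inputs are the dimension formula $\dim B = \dim A + \dim F$ and the depth formula $\operatorname{depth} B = \operatorname{depth} A + \operatorname{depth} F$, valid for any flat local homomorphism of Noetherian local rings. For the dimension formula I would obtain ``$\le$'' by lifting a system of parameters of $F$ to $B$, adjoining the images of a system of parameters of $A$, and using flatness to check that the resulting ideal is $\mathfrak{m}_B$-primary; ``$\ge$'' is the fibre-dimension bound for $\Spec B \to \Spec A$.

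The depth formula is where the work lies, and I expect it to be the main obstacle. The inequality ``$\le$'' is routine. For ``$\ge$'' I would induct on $\operatorname{depth} A$: if $\operatorname{depth} A > 0$, choose $a \in \mathfrak{m}$ that is a nonzerodivisor on $A$, hence by flatness on $B$; then $A/aA \to B/aB$ is again flat local with the \emph{same} closed fibre $F$, so the statement drops to $\operatorname{depth}(A/aA) = \operatorname{depth} A - 1$. When $\operatorname{depth} A = 0$ one must show $\operatorname{depth} B = \operatorname{depth} F$, and the mechanism is that every associated prime of $B$ lies over an associated prime of $A$: so a maximal $F$-regular sequence in $\mathfrak{m}_B/\mathfrak{m}B$ lifts to a maximal $B$-regular sequence, flatness ensuring that killing such a lift keeps $B$ flat over $A$ with fibre the corresponding quotient of $F$. (This is essentially Matsumura, \emph{Commutative Ring Theory}, Theorem~23.3.)

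Granting both formulas, the Cohen--Macaulay half is immediate: $B$ is Cohen--Macaulay iff $\operatorname{depth} B = \dim B$ iff $\operatorname{depth} A + \operatorname{depth} F = \dim A + \dim F$; since $\operatorname{depth} A \le \dim A$ and $\operatorname{depth} F \le \dim F$ always hold, equality of the sums forces equality in each summand, i.e.\ both $A$ and $F$ are Cohen--Macaulay. The converse is trivial.

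For the Gorenstein half I would add a computation of Bass numbers. Writing $I_R(t) = \sum_{i \ge 0} \mu^i_R(R)\, t^i$ for the Bass series of a Noetherian local ring $R$, the extra input is multiplicativity along a flat local homomorphism, $I_B(t) = I_A(t)\, I_F(t)$, which comes out of a change-of-rings spectral sequence built from a free resolution of $A/\mathfrak{m}$ over $A$ (it is flat over $A$, hence base-changes to $B$) together with one of $F/\mathfrak{m}_F$ over $F$; this is a secondary technical point. Now $R$ is Gorenstein precisely when $I_R(t) = t^{\dim R}$. Both $I_A(t)$ and $I_F(t)$ have nonnegative integer coefficients with strictly positive leading coefficient (in degrees $\operatorname{depth} A$, $\operatorname{depth} F$), so their product being the single monomial $t^{\dim B}$ forces each of $I_A(t)$, $I_F(t)$ to be a monomial; combined with the dimension formula this gives $I_A(t) = t^{\dim A}$ and $I_F(t) = t^{\dim F}$, i.e.\ $A$ and $F$ are both Gorenstein, and the converse is again trivial. (Alternatively, one could reduce to dimension zero via the Cohen--Macaulay case and invoke multiplicativity of the Cohen--Macaulay type under flat base change.) All of this is classical; see Matsumura, \emph{Commutative Ring Theory}, \S23, or Bruns and Herzog, \emph{Cohen--Macaulay Rings}, Chapters~2--3.
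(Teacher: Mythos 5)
The paper does not actually prove this statement: its ``proof'' is just the citation to Matsumura \cite{M}, Corollary~23.3 and Theorem~23.4, and your proposal is a reconstruction of the standard arguments behind exactly those results (the dimension and depth formulas for a flat local homomorphism, hence the Cohen--Macaulay case, then the Gorenstein case). Your reading of the evident typo ``$B$ and $F$'' as ``$A$ and $F$'' is the intended, and cited, statement, and the Cohen--Macaulay half of your sketch is the textbook proof. The one place you genuinely diverge from the cited source is the Gorenstein half: Matsumura reduces to the Artinian case using the Cohen--Macaulay statement and then works directly with injective dimension and socles, whereas you invoke full multiplicativity of Bass series, $I_B(t)=I_A(t)\,I_F(t)$, for an arbitrary flat local homomorphism. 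That is a strictly stronger input (due to Foxby/Avramov--Foxby), and your one-line justification via a change-of-rings spectral sequence only yields the inequality $\mu^n_B\le\sum_{p+q=n}\mu^p_F\mu^q_A$ without a degeneration argument; if you want a self-contained proof, your parenthetical alternative --- reduce to dimension zero via the Cohen--Macaulay case and use multiplicativity of the Cohen--Macaulay type --- is the cleaner route and is essentially what the cited Theorem~23.4 does.
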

\begin{proof}
See 
\cite[Corollary~23.3, Theorem~23.4]{M}.
\end{proof}

\begin{theorem}\label{thm-CMG}
Let $X$ be a scheme with a left action of a smooth algebraic group $G$.  Let $f: W \to X$ and $r: Y \to X$ be morphisms of schemes, such  that $W \times \Spec \bbar{k}$ is homologically transverse to the $G(\bbar{k})$-orbit of $x$  for all closed points $x \in X \times \Spec \bbar{k}$.   Further suppose that $Y$ and $W$ are Cohen-Macaulay (respectively, Gorenstein).  
Then there is a dense open subset $U \subseteq G$ so that for all closed points $g \in U$, the scheme
$\{ g\} \times W$ is homologically transverse to $Y$ and the fiber product 
$(\{ g \} \times W) \times_X Y$ is Cohen-Macaulay (respectively, Gorenstein).  
\end{theorem}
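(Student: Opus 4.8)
The plan is to extract everything from Theorem~\ref{thm-final} and then apply Theorem~\ref{thm-Mats} repeatedly, both ``downwards'' (flat with Cohen--Macaulay total space $\Rightarrow$ Cohen--Macaulay fibres) and ``upwards'' (flat with Cohen--Macaulay base and fibres $\Rightarrow$ Cohen--Macaulay total space), along a chain of flat morphisms. Applying Theorem~\ref{thm-final} with $\Fsh=\struct_W$ --- so that the hypothesis of the present theorem is exactly its condition~(1) --- I obtain, from (2) applied to $\Esh=\struct_Y$, a dense open $U_0\subseteq G$ over whose closed points $\{g\}\times W$ is homologically transverse to $Y$; this disposes of the transversality half of the conclusion. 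From (3) I obtain that $p^*\struct_W=\struct_{G\times W}$ is $\rho$-flat over $X$, i.e.\ that $\rho\colon G\times W\to X$, $\rho(g,w)=gf(w)$, is a flat morphism. It then remains to treat the singularities of $Z:=(G\times W)\times_X Y$: its fibre over a closed point $g\in G$ under the projection $\pi\colon Z\to G$ is exactly $(\{g\}\times W)\times_X Y$, so it suffices to show that $Z$ is Cohen--Macaulay (resp.\ Gorenstein) and then descend to the generic fibre of $\pi$.

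First, $G\times W$ is Cohen--Macaulay (resp.\ Gorenstein): the projection $G\times W\to W$ is smooth, as $G$ is smooth over $k$, hence has regular fibres, so Theorem~\ref{thm-Mats} applied to the flat local maps $\struct_{W,w}\to\struct_{G\times W,u}$ --- using that $W$ is Cohen--Macaulay (resp.\ Gorenstein) --- gives the claim. Second, since $\rho$ is flat, Theorem~\ref{thm-Mats} applied to $\struct_{X,\rho(u)}\to\struct_{G\times W,u}$ shows that the closed fibre, namely $\struct_{\rho^{-1}(\rho(u)),u}$, is Cohen--Macaulay (resp.\ Gorenstein); thus $\rho$ is a Cohen--Macaulay (resp.\ Gorenstein) morphism. (In particular $\struct_{X,x}$ is Cohen--Macaulay, resp.\ Gorenstein, at every $x$ in the image of $\rho$ --- which is the only part of $X$ relevant to $Z$ --- so making no assumption on $X$ costs nothing.)

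Now $\pi'\colon Z\to Y$ is the base change of $\rho$ along $r\colon Y\to X$, hence flat, and its fibre over $y\in Y$ is the $\rho$-fibre over $r(y)$ base-changed along $\kappa(r(y))\to\kappa(y)$; since Cohen--Macaulayness and Gorensteinness of finite-type schemes over a field are preserved by field extension, these fibres are Cohen--Macaulay (resp.\ Gorenstein). As $Y$ is Cohen--Macaulay (resp.\ Gorenstein), a further application of Theorem~\ref{thm-Mats}, upwards this time, to the flat local maps $\struct_{Y,y}\to\struct_{Z,z}$ shows $Z$ is Cohen--Macaulay (resp.\ Gorenstein) at every point. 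Finally, $G$ is generically reduced since it is smooth, so generic flatness applied to $\pi\colon Z\to G$ produces a dense open $U_1\subseteq G$ over which $\pi$ is flat; for a closed point $g\in U_1$ the ring $\struct_{G,g}$ is regular, so Theorem~\ref{thm-Mats} applied to $\struct_{G,g}\to\struct_{Z,z}$ (with $\struct_{Z,z}$ Cohen--Macaulay, resp.\ Gorenstein) makes the closed fibre $\struct_{\pi^{-1}(g),z}$ Cohen--Macaulay (resp.\ Gorenstein). Hence $(\{g\}\times W)\times_X Y=\pi^{-1}(g)$ is Cohen--Macaulay (resp.\ Gorenstein) for every closed $g\in U_1$, and $U:=U_0\cap U_1$ satisfies the conclusion.

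The only substantive input is the flatness of $\rho$ --- this is where hypothesis (1) enters, and it is delivered by Theorem~\ref{thm-final}; everything after that is bookkeeping with Theorem~\ref{thm-Mats}. The main thing to watch is the direction in which each instance of Theorem~\ref{thm-Mats} is used, and the stability of the Cohen--Macaulay and Gorenstein properties under the residue-field extensions $\kappa(r(y))\to\kappa(y)$ that occur when $r$ is not a monomorphism; I expect that last point to be the most technically delicate, though it is standard.
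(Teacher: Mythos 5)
Your proof is correct and follows essentially the same route as the paper's: Theorem~\ref{thm-final} with $\Fsh=\struct_W$ gives both the generic transversality and the flatness of $\rho$, and then Theorem~\ref{thm-Mats} is applied down ($G\times W$ CM $\Rightarrow$ $\rho$-fibres CM), up ($Y$ CM plus CM fibres of the base-changed map $\Rightarrow$ $Z=G\times W\times_X Y$ CM), and down again along the generically flat projection $Z\to G$. The only difference is that you spell out the residue-field-extension step for the fibres of $Z\to Y$, which the paper leaves implicit; that point is indeed standard and your treatment of it is fine.
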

\begin{proof}
Let $\rho:  G \times W \to X$ be the map $\rho(g,w) = g f(w) $ induced by $f$ and the action of $G$. Let $q:  G \times W \to G$ be projection to the first factor.   Thus there is a commutative diagram
\[ \xymatrix{
G \times W \times_X Y \ar[r]^(.7){\rho \times 1} \ar[d]_{1 \times r}	& Y \ar[d]^{r} \\
G \times W \ar[r]^{\rho} \ar[d]_{q}	& X \\
G.	}\]
By Theorem~\ref{thm-final} applied to $\sh{F} = \struct_W$, $\rho$ is flat.  Now, $G \times W$ is Cohen-Macaulay (respectively, Gorenstein), and so by Theorem~\ref{thm-Mats}, the fibers of $\rho$ are also Cohen-Macaulay (Gorenstein).  As $Y$ is Cohen-Macaulay (Gorenstein) and $\rho \times 1$ is flat, applying Theorem~\ref{thm-Mats} again, we see that $G \times W \times_X Y$ is also Cohen-Macaulay (Gorenstein).  Now, by generic flatness and Theorem~\ref{thm-final}, there is a dense open $U \subset G$ such that $q \circ (1 \times r)$ is flat over $U$ and   $\{ g\}  \times W$ is homologically transverse to $Y$ for all $g \in U$.  For $g \in U$, the fiber $(\{ g\} \times W )\times_X Y$ of $q \circ (1 \times r)$ is Cohen-Macaulay (Gorenstein), by Theorem~\ref{thm-Mats} again.
\end{proof}

We note that, although we did not assume that $X$ is Cohen-Macaulay (respectively, Gorenstein), it follows from the flatness of $\rho$ and from Theorem~\ref{thm-Mats}.  

We also remark that  if $Y$ and $W$ are homologically transverse local complete intersections in a smooth $X$, it is not hard to show directly that $Y \cap W$ is also a local complete intersection.  We do not know if it is true in general that the homologically transverse intersection of two Cohen-Macaulay  subschemes is Cohen-Macaulay, although it follows, for example, from 
\cite[Lemma, p. 108]{F} if $X$ is smooth.

Theorem~\ref{ithm-CMG} follows directly from Theorem~\ref{thm-CMG}.

Thus we may refine Theorem~\ref{thm-MS} to obtain a result on transitive group actions that echoes the Kleiman-Bertini theorem even more closely.

\begin{corollary}\label{cor-CMG}
Let $X$ be a scheme with a geometrically transitive left action of a smooth algebraic group $G$.  Let $Y$ and $W$ be   Cohen-Macaulay (respectively, Gorenstein) closed subschemes of $X$. Then there is a dense Zariski open subset $U$ of $G$ such that $gW$ is homologically transverse to $Y$  and $gW  \cap Y$ is Cohen-Macaulay (respectively, Gorenstein) for all $k$-rational points $g \in U$. \qed
\end{corollary}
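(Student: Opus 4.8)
The plan is to obtain the corollary as a direct specialization of Theorem~\ref{thm-CMG}, so the only real content is checking that a geometrically transitive action automatically satisfies that theorem's orbit-transversality hypothesis. First I would take $f\colon W \hookrightarrow X$ and $r\colon Y \hookrightarrow X$ to be the inclusions. Since the action of $G$ on $X$ is geometrically transitive, for every closed point $x \in X \times \Spec \kbar$ the closure of the $G(\kbar)$-orbit of $x$ is all of $X \times \Spec \kbar$. The pullback of $\struct_W$ to $W \times \Spec \kbar$ is homologically transverse to $\struct_{X \times \Spec \kbar}$ for the trivial reason that the structure sheaf of a scheme is flat over that scheme, so $\shTor_j$ against it vanishes for $j \geq 1$; this is the same observation made after Theorem~\ref{thm-generalgroup}, namely that geometric transitivity makes condition~(1) there trivially true. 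Hence the hypotheses of Theorem~\ref{thm-CMG} are met with $\Fsh = \struct_W$.

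Applying Theorem~\ref{thm-CMG} then yields a dense open subset $U \subseteq G$ such that for every closed point $g \in U$ the scheme $\{g\} \times W$ is homologically transverse to $Y$ and the fiber product $(\{g\} \times W) \times_X Y$ is Cohen-Macaulay (respectively, Gorenstein). The remaining step is purely a matter of translating notation. If $g$ is a $k$-rational point of $G$, then $g$ acts on $X$ as an automorphism; the composite $\{g\} \times W \to G \times W \to G \times X \to X$ (with middle map $1 \times f$ and last map $\mu$) has scheme-theoretic image the translate $gW$, and $g\struct_W$ pushed forward along this composite is $\struct_{gW}$, while $(\{g\} \times W) \times_X Y$ is the scheme-theoretic intersection $gW \cap Y$. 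Thus for every $k$-rational $g \in U$, $gW$ is homologically transverse to $Y$ and $gW \cap Y$ is Cohen-Macaulay (respectively, Gorenstein), which is the assertion of the corollary.

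I do not expect a genuine obstacle, since the result is a specialization of Theorem~\ref{thm-CMG}; the one place to be slightly careful is the passage from ``closed points $g \in U$'' in Theorem~\ref{thm-CMG} to ``$k$-rational points $g \in U$'' in the corollary, but this is merely a restriction of the class of $g$ considered and hence harmless (and, by the remark following the proof of Theorem~\ref{thm-generalgroup}, $U$ will in the usual situations contain a $k$-rational point, so the statement is not vacuous). As an alternative one could avoid citing Theorem~\ref{thm-CMG} and argue in place: geometric transitivity makes $\rho\colon G \times W \to X$ flat via the implication (1)$\Rightarrow$(3) of Theorem~\ref{thm-final}, and then two applications of Theorem~\ref{thm-Mats}---first to the Cohen-Macaulay (Gorenstein) fibers of $\rho$, then after base change along $r\colon Y \to X$---together with generic flatness produce the Cohen-Macaulay (Gorenstein) conclusion on a dense open locus of $G$; but invoking Theorem~\ref{thm-CMG} as a black box is the cleaner route.
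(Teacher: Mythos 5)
Your proposal is correct and is exactly the argument the paper intends: the corollary is stated with a \qed precisely because geometric transitivity makes every $G(\kbar)$-orbit closure equal to $X \times \Spec \kbar$, against whose structure sheaf all higher $\shTor$ vanish trivially, so Theorem~\ref{thm-CMG} applies verbatim. Your handling of the passage from closed points to $k$-rational points and the identification of $(\{g\}\times W)\times_X Y$ with $gW\cap Y$ is also the intended reading.
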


{\bf Acknowledgements.}  The author is grateful to Ezra Miller for his extraordinarily careful reading of an earlier version of this paper and for several corrections and discussions, to David Speyer for many informative conversations, and to  Mel Hochster, Kyle Hofmann, Gopal Prasad, and Karen Smith for their suggestions and assistance with 
references.  The author particularly wishes to thank Brian Conrad for finding an error in an earlier version of this paper and for several helpful discussions.    The author thanks  Susan Colley and Gary Kennedy for calling her attention to \cite{Sp}.


\begin{thebibliography}{MS}
\bibitem[AV]{AV}
M. Artin and M. Van den Bergh,  \newblock
 Twisted homogeneous coordinate rings, 
J. Algebra {\bf 133} (1990), no. 2, 249--271.
\bibitem[B]{B}
A. Borel, \newblock
 Linear Algebraic Groups, \newblock
 Graduate Texts in Mathematics Vol. 126.  \newblock
 Springer- 
Verlag, New York, 1991. 
\bibitem[E]{Eis}
D. Eisenbud, \newblock
 Commutative algebra with a view toward algebraic geometry,
Graduate Texts in Mathematics 150,  \newblock
 Springer-Verlag, New York, 1995.
\bibitem[FP]{F}
W. Fulton and P. Pragacz,
Schubert varieties and degeneracy loci,
Lecture Notes in Mathematics, 1689,
Springer-Verlag, New York, 1998.
\bibitem[G]{G} A. Grothendieck, \newblock
{\'El\'ements de g\'eom\'etrie alg\'ebrique},  III. 2., 
Inst. Hautes \'Etudes Sci. Publ. Math. No. 17, 1963.
\bibitem[Ha]{Ha}
R. Hartshorne,  \newblock
{Algebraic Geometry,} \newblock
Graduate Texts in Mathematics  52, \newblock
Springer-Verlag, New York, 1977.
\bibitem[K]{Kl}
S. L. Kleiman, \newblock
 {The transversality of a general translate,  }\newblock
 Compositio Math. {\bf 28} (1974), 287--297.
\bibitem[M]{M} H. Matsumura, \newblock
 {Commutative ring theory}, translated from the Japanese by M. Reid, second edition, Cambridge Studies in Advanced Mathematics, 8, Cambridge University Press, Cambridge, 1989.
\bibitem[MS]{MS}
E. Miller and D. E. Speyer, \newblock
 {A Kleiman-Bertini theorem for sheaf tensor products,}
 to appear in {\em J. Alg Geom}; arxiv: math.AG/0601202.
\bibitem[Si]{S2}
S. J. Sierra, \newblock
{Geometric idealizers,} arxiv: 0809.3971.
\bibitem[Sp]{Sp}
R. Speiser, \newblock
{Transversality theorems for families of maps, }\newblock
Algebraic geometry (Sundance, UT, 1986), 235--252, \newblock
 Lecture Notes in Math., 1311, Springer-Verlag, Berlin, 1988. 
\end{thebibliography}
\end{document}